\theoremstyle{plain}
\newtheorem{thm}{Theorem}[section]
\newtheorem{lem}[thm]{Lemma}
\newtheorem{prop}[thm]{Proposition}
\newtheorem{cor}[thm]{Corollary}
\theoremstyle{definition}
\newtheorem{eg}[thm]{Example}
\newtheorem{defn}[thm]{Definition}
\newtheorem{rem}[thm]{Remark}
\newtheorem{remark}[thm]{Remark}
\date{}
\newcommand\bit{\begin{itemize}}
\newcommand\eit{\end{itemize}}
\newcommand\bet{\begin{enumerate}}
\newcommand\eet{\end{enumerate}}
\newcommand\ed{\end{document}}
\DeclareFontFamily{U}{mathx}{\hyphenchar\font45}
\DeclareFontShape{U}{mathx}{m}{n}{
      <5> <6> <7> <8> <9> <10>
      <10.95> <12> <14.4> <17.28> <20.74> <24.88>
      mathx10
      }{}
\DeclareSymbolFont{mathx}{U}{mathx}{m}{n}
\DeclareMathAccent{\widecheck}{0}{mathx}{"71}
\DeclareMathAccent{\wideparen}{0}{mathx}{"75}
\renewcommand{\a}{\alpha}
\newcommand{\e}{\varepsilon}
\renewcommand{\k}{\kappa}
\newcommand\s{\sigma}
\newcommand\w{\omega}
\newcommand\Om{\Omega}
\newcommand\del{\partial}
\newcommand\adel{\ol{\partial}}
\newcommand\DEL{\Delta}
\newcommand\bC{{\mathbb C}}
\newcommand\bR{{\mathbb R}}
\newcommand\bZ{{\mathbb Z}}
\newcommand\B{{\mathcal B}}
\newcommand\F{{\mathcal F}}
\renewcommand\H{\mathcal{H}}
\renewcommand{\O}{\mathcal{O}}
\newcommand\exd{\mathrm{d}}
\newcommand\dom{\mathrm{dom}}
\newcommand\haar{\mathrm{\bf h}}
\newcommand\unit{\mathrm{U}}
\newcommand\id{\mathrm{id}}
\newcommand\oby{\otimes}
\newcommand\wed{\wedge}
\newcommand\sseq{\subseteq}
\def\qbinom#1#2{\ensuremath{\left[\kern-.3em\left[\genfrac{}{}{0pt}{}{#1}{#2}\right]\kern-.3em\right]_q}}
\newcommand\ol{\overline}
\newcommand\EE{{\mathcal E}}
\newcommand\FF{{\mathcal F}}
\renewcommand\H{\mathcal{H}}
\newcommand{\OO}{\mathcal{O}}
\author{Biswarup Das}
\address{Instytut Matematyczny, Uniwersytet Wroc\l{}awski, pl.Grunwaldzki 2/4, 50-384 Wroc\l{}aw, Poland}
\email{biswarup.das@math.uni.wroc.pl}
\author[R. \'O Buachalla]{R\'eamonn \'O Buachalla}
\address{Mathematical Institute of Charles University, Sokolovsk\'a 83, Prague, Czech Republic} \email{obuachalla@karlin.mff.cuni.cz}
\author{Petr Somberg}
\address{Mathematical Institute of Charles University, Sokolovsk\'a 83, Prague, Czech Republic} \email{somberg@karlin.mff.cuni.cz}
\title[{\bf Compact  Quantum Homogeneous K\"ahler Spaces}]{{\bf  Compact  Quantum Homogeneous K\"ahler Spaces
}}
\thanks{R\'{O}B is supported by the GA\v{C}R/NCN grant \emph{Quantum Geometric Representation Theory and Non- commutative Fibrations} 24-11728K, and acknowledges support from COST Action 21109 CaLISTA, supported by COST (European Cooperation in Science and Technology) and HORIZON- MSCA-2022-SE-01-01 CaLIGOLA. PS was supported by the GA\v{C}R grant GA22-00091S.}
\begin{document}

\maketitle

\begin{abstract}
Noncommutative K\"ahler structures provide an algebraic framework for studying noncommutative complex geometry on quantum homogeneous spaces. In this paper, we introduce the notion of a \emph{compact quantum homogeneous K\"ahler space} which gives a natural set of compatibility conditions between covariant K\"ahler  structures and Woronowicz's theory of  compact quantum groups. Each such object admits a Hilbert space completion possessing a remarkably rich yet tractable structure.  The analytic behaviour of the associated Dolbeault--Dirac operators is moulded by the complex geometry of the underlying calculus. In particular, twisting the Dolbeault--Dirac operator by a negative Hermitian holomorphic module is shown to give a Fredholm operator if and only if the top anti-holomorphic cohomology group is finite-dimensional. In this case, the operator's index coincides with the twisted holomorphic Euler characteristic of the underlying noncommutative complex structure. The irreducible quantum flag manifolds, endowed with their Heckenberger--Kolb calculi, are presented as motivating examples.
\end{abstract}

\section{Introduction}

Since the emergence of  quantum groups in the 1980s, a central role in their presentation and development has been played by the theory of operator algebras. We mention in particular  Woronowicz's seminal notion of a compact quantum group  \cite{WoroCQPGs,WoroDC}. There exists, however,  a stark contrast between the development of the noncommutative topological and the noncommutative differential geometric aspects of the theory. For example, for Drinfeld--Jimbo quantum groups, their $C^*$-algebraic  $K$-theory has long been known to be the same as for their  classical counterparts \cite{Nagy}. By contrast, the unbounded formulation of $K$-homology, which is to say Connes and Moscovici's theory of spectral triples, remains very poorly understood.  Indeed, despite  a  large  number  of  very important  contributions  over  the  last  thirty  years,  there  is  still  no consensus on how to construct a spectral triple for $\O_q(SU_2)$, probably the most fundamental example of a quantum group.

There does, however, exist a long standing algebraic approach to the construction of \mbox{$q$-deformed} differential operators for quantum groups based on the theory of covariant differential calculi. This has its origins in the work of Woronowicz \cite{WoroDC}, with steady advances made in the following decades by many others, most notably Majid \cite{BeggsMajid:Leabh}. As has become increasingly clear in recent years, this approach is particularly suited to the study of quantum flag manifolds, quantum homogeneous spaces which $q$-deform the coordinate rings of the classical flag manifolds  $G/L_S$. These quantum spaces are distinguished by being braided commutative algebra objects in the braided monoidal category of $\O_q(G)$-comodules, and have a geometric structure much closer to the classical situation than quantum groups themselves. This is demonstrated by the existence of an essentially unique $q$-deformed de Rham complex for the \emph{irreducible} quantum flag manifolds, as  shown by Heckenberger and Kolb in their seminal series of papers \cite{HK, HKdR, HKBGG}. This makes the quantum flag manifolds a far more tractable starting point than quantum groups for investigating $q$-deformed noncommutative geometry.


The classical flag manifolds are compact connected homogeneous K\"ahler manifolds, providing us with a rich store of  classical geometric structures to exploit. Motivated by this, the notion of a noncommutative Hermitian  structure was introduced in \cite{MMF3} to provide a framework in which to study the noncommutative geometry of the quantum flag manifolds. Many of the fundamental results of  Hermitian and K\"ahler geometry  follow from the existence of such a structure, providing powerful tools with which to study the underlying calculus. The existence of a K\"ahler structure was verified for  the Heckenberger--Kolb calculus of quantum projective space in \cite{MMF3}. This result was later extended  by Matassa  \cite{MarcoConj} to every Heckenberger--Kolb calculus, for all but a finite number of values of $q$. Moreover, K\"ahler structures have been discovered in the setting of holomorphic \'etale groupoids \cite{JJGos}, and for finite graphs \cite{FiniteGraphKahler}, promising a much wider domain of application than initially expected.


In this paper we build on this rich algebraic and geometric structure to produce a  theory of bounded and unbounded differential operators acting on square integrable forms. 
We do so in the novel framework of compact quantum homogeneous Hermitian spaces (CQH-Hermitian spaces) which detail a natural set of compatibility conditions between covariant Hermitian structures and Woronowicz's theory of compact quantum groups. Every CQH-Hermitian space is shown to have a naturally associated Hilbert space completion. Moreover, much of the theory of Hermitian structures carries over to the square integrable setting, giving almost complex and Lefschetz decompositions, as well as a bounded representation of $\frak{sl}_2$. The de Rham, holomorphic, and anti-holomorphic differentials also behave very well with respect to completion.  All three Dirac operators $D_{\del}, D_{\adel}$, and $D_{\exd}$ are  essentially self-adjoint, giving us access to powerful analytic machinery such as functional calculus. The spectral and index theoretic properties of these operators are intimately connected with the curvature and cohomology of the underlying calculus. Moreover, these operators are highly amenable to applications of concepts and structures from classical complex geometry. As shown in \textsection \ref{section:TFFFred}, twisting the anti-holomorphic Dolbeault--Dirac operator of a CQH-K\"ahler space by a negative (anti-ample) relative Hopf module produces a Fredholm operator if and only if the top anti-holomorphic cohomology group is finite-dimensional.  Just as in the classical case, Hodge theory then implies that the index of the twisted operator  is  given by the twisted anti-holomorphic Euler characteristic. This invariant can be determined by geometric means. In particular, for positive modules, it follows from the Kodaira vanishing theorem for noncommutative K\"ahler structures that all higher cohomologies vanish, meaning that the  index is concentrated in degree zero. The case of negative modules follows analogously through an application of noncommutative Serre duality \cite[\textsection 6.2]{OSV}. 

 
There exists a large literature dealing with the analytic properties of Dolbeault--Dirac operators over quantum flag manifolds $\O_q(G/L_S)$. For example, see the pioneering papers \cite{ DSPodles, SISSACPn, RoberOwczarek}, or the review of such constructions  in \cite[\textsection 1]{DOS1}. This paper is inspired by many of the observations and results in these works. Indeed, our principal motivation for introducing CQH-K\"ahler spaces is to provide a robust formal framework in which to study the analytic properties of Dolbeault--Dirac operators over $\O_q(G/L_S)$.

\subsection{Summary of the Paper}

The paper is organised as follows: In \textsection 2 we recall from \cite{MMF3} the necessary basics of Hermitian and K\"ahler structures, and Hermitian holomorphic modules.

In \textsection 3, we introduce the notion of compact quantum homogeneous  Hermitian space $(B \subseteq A, \Omega^{\bullet}, \Omega^{(\bullet,\bullet)}, \sigma)$, and its twist by an Hermitian module $\F$ . We then use Takeuchi's categorical equivalence to show boundedness of morphisms, giving us a bounded representation of $\frak{sl}_2$ on $L^2(\Omega^{\bullet} \otimes_B \F)$. Moreover, we prove boundedness of multiplication operators, and conclude boundedness of the commutators $[D_{\adel_{\F}},b]$, for all $b \in B$.

In \textsection 4 we discuss closability and  essential self-adjointness for our Dirac and Laplace operators.

In \textsection 5 we show that twisting by a negative module produces a  Fredholm operator if and only if the top anti-holomorphic cohomology group of $\F$ is finite-dimensional. 


In \textsection 6  we recall our  motivating family of examples, the irreducible quantum flag manifolds $\O_q(G/L_S)$ endowed with their Heckenberger--Kolb calculi. We produce a family of Dolbeault--Dirac Fredholm operators for each $\O_q(G/L_S)$ through twisting by a negative line module. Moreover, we give an explicit presentation of the operator index in terms of the Weyl dimension formula.

We finish with three  appendices. The first recalls the theory of compact quantum groups algebras, the second recalls elementary material on unbounded operators, and the third discusses the relationship with the theory of spectral triples.

\subsubsection*{Acknowledgements:} The authors would like to thank Karen Strung, Branimir \'{C}a\'{c}i\'c, Elmar Wagner,  Fredy D\'iaz Garc\'ia, Andrey Krutov, Simon Brain, Adam Rennie, Paolo Saracco, Kenny De Commer, Matthias Fischmann, Adam--Christiaan van Roosmalen, Jan \v{S}t\!'ov\'i\v{c}ek, Zhaoting Wei, Thomas Weber, and Myriam Mahaman for many useful discussions during the preparation of this paper.  The second author would like to thank IMPAN Wroc\l{}aw for hosting him in November 2017, and would also  like to thank the Institute for Mathematics, Astrophysics and Particle Physics, Radboud University, Nijmegen for hosting him in the winter of 2017 and 2018.

\section{Preliminaries on Hermitian Structures}

We recall the basic definitions and results for Hermitian, and K\"ahler structures over differential $*$-calculi. For a detailed discussion of differential $*$-calculi in general, see  \cite{BeggsMajid:Leabh}. For a more detailed introduction to K\"ahler structures see \cite{MMF3}. For a presentation of classical complex and K\"ahler geometry see \cite{HUY}. All algebras are unital and over $\mathbb{C}$, and all unadorned tensor products are over $\mathbb{C}$.

\subsection{Differential Calculi and Complex Structures}

A {\em differential calculus}, or \emph{dc}, is a differential graded algebra $\big(\Omega^\bullet \cong \bigoplus_{k \in \mathbb{Z}_{\geq 0}} \Omega^k, \exd\big)$ generated as an algebra by the elements $a, \exd b$, for $a,b \in \Omega^0$.  A dc \emph{over} an algebra $B$ is dc such that $\Omega^0 = B$. We call an element of $\Omega^k$ a $k$-form. A {\em $*$-differential calculus}, or a \emph{$*$-dc}, over a $*$-algebra $B$ is a dc over $B$ such that the \mbox{$*$-map} of $B$ extends to a conjugate linear involutive map $*:\Omega^\bullet \to \Omega^\bullet$ satisfying $\exd(\w^*) = (\exd \w)^*$, and 
$
\big(\w \wed \nu\big)^*  =  (-1)^{kl} \nu^* \wed \w^*,  \text{ for all } \w \in \Omega^k, \, \nu \in \Omega^l. 
$
See \cite[\textsection 1]{BeggsMajid:Leabh} for a more detailed discussion of differential calculi.

An {\em almost complex structure} $\Omega^{(\bullet,\bullet)}$, for a  $*$-dc  $(\Omega^{\bullet},\exd)$, is an $\mathbb{Z}^2_{\geq 0}$-algebra grading 
of $\Omega^{\bullet}$ such that
\begin{align*} 
\Omega^k = \bigoplus_{a+b = k} \Omega^{(a,b)}, & & \big(\Omega^{(a,b)}\big)^* = \Omega^{(b,a)}, & & \textrm{ for all } (a,b) \in \mathbb{Z}^2_{\geq 0}. 
\end{align*}
We call an element of $\Omega^{(a,b)}$ an $(a,b)$-form. If the exterior derivative decomposes into a sum $\exd = \del + \adel$, for $\del$ a (necessarily unique) degree $(1,0)$-map, and $\adel$ a (necessarily unique) degree $(0,1)$-map, then we say that $\Omega^{(\bullet,\bullet)}$ is a \emph{complex structure}. It follows that we have a double complex. The {\em opposite} complex structure of a complex structure $\Omega^{(\bullet,\bullet)}$ is the \mbox{$\mathbb{Z}^2_{\geq 0}$}-algebra grading  $\overline{\Om}^{(\bullet,\bullet)}$, defined by $\ol{\Om}^{(a,b)} := \Omega^{(b,a)}$, for $(a,b) \in \mathbb{Z}_{\geq 0}^2$. See \cite[\textsection 1]{BeggsMajid:Leabh} or \cite{BS, KLvSPodles, MMF2} for a more detailed discussion of complex structures.

\subsection{Hermitian and K\"ahler Structures} \label{subsection:HandKS}

We recall the definition of an Hermitian structure, as introduced in \cite[\textsection 4]{MMF3}, which generalises the properties of the fundamental form of an Hermitian metric.  A dc is said to be of {\em total degree} $m \in \mathbb{Z}_{\geq 0}$ if $\Omega^m \neq 0$, and $\Omega^{k} = 0$, for all $k > m$.

\begin{defn} An {\em Hermitian structure} $(\Omega^{(\bullet,\bullet)}, \s)$ for a $*$-dc $\Omega^{\bullet}$, of even total degree $2n$,  is a pair  consisting of  a complex structure  $\Omega^{(\bullet,\bullet)}$, and  a closed central real $(1,1)$-form $\sigma$, called the {\em Hermitian form}, such that, with respect to the {\em Lefschetz operator}
$L_{\sigma}:\Omega^\bullet \to \Omega^\bullet$, defined by $\omega \mapsto \sigma \wed \omega$, isomorphisms are given by
\begin{align} \label{eqn:Liso}
L_{\sigma}^{n-k}: \Omega^{k} \to  \Omega^{2n-k}, & & \text{ for all } k = 0, \dots, n-1.
\end{align}
A \emph{K\"ahler structure} is an Hermitian structure $(\Omega^{(\bullet,\bullet)}, \kappa)$ satisfying $\exd \kappa = 0$, and in this case we refer to $\kappa$ as the \emph{K\"ahler form}.
\end{defn} 

For $L_{\sigma}$ the Lefschetz operator of an Hermitian structure, we denote
\begin{align*}
P^{k} : = \begin{cases} 
      \{\a \in \Omega^{k} \,|\, L_{\sigma}^{n-k+1}(\a) = 0\}, &  \text{ ~ if } k \leq n,\\
      0 & \text{ ~ if } k > n.
   \end{cases}
\end{align*}
An element of $P^{\bullet} := \oplus_{k=0}^n P^k$ is called a {\em primitive form}. As established in \cite[Proposition 4.3]{MMF3}, a $B$-bimodule decomposition of $\Omega^{k}$, for all $k \in \mathbb{Z}_{\geq 0}$,  is given by
\begin{align*}
\Omega^{k} \cong \bigoplus_{j \in \mathbb{Z}_{\geq 0}} L_{\sigma}^j\big(P^{k-2j}\big).
\end{align*}
We call this the \emph{Lefschetz decomposition} of $\Omega^{\bullet}$.

In classical Hermitian geometry, the Hodge map of an Hermitian metric is related to the associated Lefschetz decomposition through the Weil formula (see \cite[Th\'eor\`eme 1.2]{Weil} or \cite[Proposition 1.2.31]{HUY}). In \cite[Definition 4.11]{MMF3} a noncommutative  generalisation of the Weil formula was used to define a noncommutative Hodge map $\ast_{\sigma}$ for any noncommutative Hermitian structure. The {\em metric} associated to an Hermitian structure $\big(\Omega^{(\bullet,\bullet)},\s\big)$ is the unique sesquilinear form $g_\s:\Omega^\bullet \times \Omega^\bullet \to B$  for which $g_{\sigma}\big(\Omega^k, \Omega^l\big) = 0$, for all $k \neq l$, and for which 
\begin{align*}
g_\s(\w, \nu) =  \ast_\s\big(\!\ast_{\sigma}(\w^*) \wedge \nu \big), & &  \textrm{ for all } \w,\nu \in \Omega^k.
\end{align*}
An important fact is that $g_{\sigma}$ is \emph{conjugate symmetric} that is,
\begin{align*}
g_{\sigma}(\w, \nu) = g_{\sigma}(\nu, \w)^*, & & \text{ for all } \w, \nu \in \Omega^\bullet,
\end{align*}
see \cite[Corollary 5.3]{MMF3} for details. For a $*$-algebra $B$, we consider the \emph{cone of positive elements} 
\[ 
B_{\geq  0} := \Big\{\sum_i b_i^* b_i  \,|\, b_i \in B \Big\}\!.
\]
We denote the non-zero positive elements of $B$ by $B_{>0} := B_{\geq 0} \backslash \{0\}$. We say that an Hermitian structure $(\Omega^{(\bullet,\bullet)}, \sigma)$ is \emph{positive definite} if the associated metric $g_{\sigma}$ is \emph{positive definite}, which is to say, if $g_{\sigma}$ satisfies 
\begin{align*}
g_{\sigma}(\omega,\omega) \in B_{>  0}, & & \textrm{ for all non-zero } \omega \in \Omega^\bullet.
\end{align*}
As established in \cite[Lemma 5.2]{MMF3}, the Lefschetz decomposition is orthogonal with respect to $g_{\sigma}$, as is the $\mathbb{Z}^2_{\geq 0}$-decomposition of the complex structure.

\subsection{A Representation of $\frak{sl}_2$} \label{subsection:sl2}

As usual, we denote by $\frak{sl}_2$ the Lie algebra of trace-free $(2 \times 2)$-matrices, and take its standard basis 
\begin{align*}
e:= \begin{bmatrix}
0 & 1 \\
0 & 0
\end{bmatrix}\!, & & h := \begin{bmatrix}
1 & 0 \\
0 & -1
\end{bmatrix}\!, & & f:= \begin{bmatrix}
0 & 0 \\
1 & 0
\end{bmatrix}\!.
\end{align*}
As is readily verified \cite[Lemma 5.11]{MMF3}, the Lefschetz map $L_{\sigma}$ is adjointable on $\Omega^\bullet$ with respect to $g_{\sigma}$. Explicitly
\begin{align*} 
\Lambda_{\sigma} := L_{\sigma}^\dagger = \ast_{\s}^{-1} \circ L_{\sigma} \circ \ast_{\s}.
\end{align*}
Taking $L_{\sigma}$ and $\Lambda_{\sigma}$ together with the \emph{form degree counting operator}
\begin{align*}
H: \Omega^\bullet \to \Omega^\bullet, & & \omega \mapsto (k-n)\omega,  ~~ \text{ for } \omega \in \Omega^k,
\end{align*}
we get the following commutator relations:
\begin{align*}
[H,L_{\sigma}] = 2 L_{\sigma}, & & [L_{\sigma},\Lambda_{\sigma}] = H, & & [H,\Lambda_{\sigma}] = - 2  \Lambda_{\sigma}.
\end{align*}
Thus any K\"ahler structure gives a Lie algebra representation $T: \frak{sl}_2 \to  \frak{gl}\left(\Omega^\bullet\right)$ defined by $T(e) = E$, $T(h) = H$, and $T(f) = F$. Moreover, for any left $B$-module $\F$, we can extend this to a representation on $\Omega^{\bullet}  \otimes_B \F$ using the three operators
$L_{\F} := L_{\sigma} \otimes \id_{\F}$, $H_{\F} := H \otimes \id_{\F}$, and  $\Lambda_{\F} := \Lambda_{\sigma} \otimes \id_{\F}$.

\subsection{Holomorphic Modules}

Motivated by the Koszul--Malgrange characterisation of holomorphic bundles \cite{KoszulMalgrange}, noncommutative holomorphic modules have been considered  a number of times in the literature, see for example \cite{BS}, \cite{KLvSPodles}, \cite{OSV}, or \cite{PolishSch}.

For $\Omega^\bullet$ a dc over an algebra $B$, and $\mathcal{F}$ a left $B$-module, a \emph{(left) connection} for $\F$ is a $\mathbb{C}$-linear map $\nabla:\mathcal{F} \to  \Omega^1 \otimes_B \F $ satisfying 
\begin{align*}
\nabla(bf) =  \exd b \otimes f + b \nabla f, & & \textrm{ for all } b \in B, \, f \in \F.
\end{align*}
For a $B$-bimodule $\F$, a \emph{bimodule connection} for $\F$ is a pair $(\nabla,\sigma)$, where $\nabla$ is a connection and  $\sigma: \F \otimes_B \Omega^1 \to \Omega^1 \otimes_B \F$ is a bimodule map satisfying
\begin{align}\label{eqn:BiC}
\nabla(fb) = \nabla(f)b + \sigma (f \otimes \exd b), & & \textrm{ for all } f \in \F, \, b \in B.
\end{align}
We note that since $\sigma(f \otimes \exd b) = \nabla(fb)-\nabla(f)b$, the bimodule map $\sigma$ is uniquely determined by $\nabla$.

For a choice $\Omega^{(\bullet,\bullet)}$ of complex structure on $\Omega^{\bullet}$, a \emph{$(0,1)$-connection on $\mathcal{F}$} is a connection with respect to the dc $(\Omega^{(0,\bullet)},\adel)$. Any connection can be extended to a $\mathbb{C}$-linear map $\nabla: \Omega^{\bullet} \otimes_B \F  \to \Omega^{\bullet} \otimes_B \F$  by  defining 
\begin{align*}
\nabla(\omega \otimes f) =  \exd \omega \otimes f  +  (-1)^{|\omega|} \omega \wedge \nabla(f), & & \textrm{ for } \omega \in \Omega^{\bullet}, \, f \in \F,
\end{align*}
where $\omega$ is a homogeneous element, of the graded algebra $\Omega^{\bullet}$, of degree $|\omega|$. 
The \emph{curvature} of a connection is the left $B$-module map $\nabla^2: \mathcal{F} \to \Omega^2 \otimes_B \mathcal{F}$. A connection is said to be {\em flat} if $\nabla^2 = 0$. Note that a connection is flat if and only if  the pair $(\Omega^{\bullet} \otimes_B \F, \nabla)$ is a complex. 

\begin{defn}
For an algebra $B$, a \emph{holomorphic module over $B$} is a pair $(\mathcal{F},\adel_{\mathcal{F}})$, where  $\mathcal{F}$ is a finitely generated projective left $B$-module, and  $\adel_{\mathcal{F}}: \mathcal{F} \to  \Omega^{(0,1)}  \otimes_B \F$ is a flat $(0,1)$-connection, which we call the \emph{holomorphic structure} of $(\F, \adel_{\F})$. 
\end{defn}

\subsection{Hermitian Modules} \label{subsection:HVBS}

When $B$ is a $*$-algebra, we can also generalise the classical notion of an Hermitian metric for a complex vector bundle. For a $B$-bimodule $\F$, denote by $\F^{\vee}$ the space $\mathrm{Hom}_{B}(\F, B)$ of right $B$-module maps, which is a $B$-bimodule with respect to the left and right multiplications
\begin{align*}
(b\phi)(f) := b\phi(f), & & (\phi b)(f) := \phi(bf), & & \text{ for } b \in B, \, \phi \in \F^{\vee}, \textrm{ and } f \in \FF.
\end{align*}
Moreover, we denote by $\overline{\F}$ the \emph{conjugate module} of $\F$, which is a $B$-bimodule with respect to the left and right multiplications
\begin{align*}
b\overline{f} = \overline{fb^*}, & & \overline{f}b = \overline{b^*f} & & \textrm{ for } b \in B, \, f \in \F. 
\end{align*}
\begin{defn} \label{defn:Hermitianmetric} 
A \emph{weak Hermitian module} over a $*$-algebra $B$ is a pair $(\F,h_{\F})$, where $\F$ is a  $B$-bimodule and $h_{\F}:\overline{\F} \to \F^{\vee}  $ is a $B$-bimodule isomorphism, such that, for  the associated sesquilinear pairing, 
\begin{align*}
h'_{\F}(-,-): \F \times \F \to B, & & (f,k) \mapsto h_{\F}(\overline{f})(k)
\end{align*}
it holds  that, for all $f,k \in \F$, 
\begin{align*}
1. ~ h'_{\F}(f,k) = h'_{\F}(k,f)^*, & &  2. ~ h'_{\F}(f,f) \in B_{>0}, \textrm{ when } f \neq 0.
\end{align*}
\end{defn} 
Some immediate consequences of the definition are the following useful identities:
\begin{align} \label{eqn:hSUBFproperties}
h'_{\F}(bf,k) = h'_{\F}(f,b^*k), & &  h'_{\F}(fb,kc) = b^*h'_{\F}(f,b^*k)c, 
\end{align}
which hold for all $b,c \in B, \, f,k \in \F$. 

We can associate to any weak Hermitian module $(\F,h_{\F})$ the \emph{Hermitian metric}
\begin{align*}
g_{\F}: \Omega^{\bullet} \otimes_B \F  \times \Omega^{\bullet} \otimes_B \F   \to B.
\end{align*}
uniquely determined by 
$$
g_{\F}(\omega \otimes f, \nu \otimes k) := h'_{\F}\big(f, g_{\sigma}(\omega,\nu)k\big).
$$ 
As shown in  \cite[\textsection 5]{OSV}, the metric is conjugate symmetric, which is to say
\begin{align} \label{eqn:conjsymm}
g_{\F}(\alpha,\beta) = g_{\F}(\beta,\alpha)^*, & & \textrm{ for all } \alpha,\beta \in \Omega^{\bullet} \otimes_B \F .
\end{align}
Moreover, we note that the $\mathbb{Z}^2_{\geq 0}$-decomposition of $\Omega^{\bullet} \otimes_B \F $ is orthogonal with respect to $g_{\F}$.

\begin{defn}
If $g_{\F}$ gives $\Omega^{\bullet} \otimes_B \F$ the structure of a weak Hermitian module then we say that $(\F,h_{\F})$ is an \emph{Hermitian module}.
\end{defn}

\begin{remark}
In \cite{OSV} the pairings $g_{\sigma}$ and $g_{\F}$ were chosen to be linear in the first variable, whereas here we take linearity in the second variable. (This is required in order that the representation in Proposition \ref{prop:star.map} is a $*$-map.) However, as a careful reading of the \cite{OSV} will confirm, this does not affect any of the results used above.
\end{remark}

\subsection{Chern Connections and Positive Hermitian Holomorphic Modules}

An \emph{Hermitian holomorphic module} is a triple $(\mathcal{F},h_\F,\adel_{\mathcal{F}})$, where $(\mathcal{F},h_\F)$ is an Hermitian module, and $(\mathcal{F},\adel_{\mathcal{F}})$ is a holomorphic module. For any Hermitian holomorphic module $(\mathcal{F},h_\F,\adel_{\mathcal{F}})$, there is a direct noncommutative generalisation of the classical definition of an Hermitian connection, namely a connection $\nabla:\mathcal{F} \to  \Omega^{1} \otimes_B \FF  $ which is compatible with $h_{\F}$ in the sense of \cite{BeggsMajidChern}, see also the accompanying paper \cite[\textsection 2.9]{SpectralGap}. For any  Hermitian holomorphic module $(\mathcal{F},h_\F,\adel_{\mathcal{F}})$, there exists a unique Hermitian connection $\nabla:\mathcal{F} \to  \Omega^1 \otimes_B \F$ satisfying
\begin{align*}
 \adel_{\F} = \big(\id \otimes \mathrm{proj}_{\Omega^{(0,1)}}\big) \circ  \nabla,
\end{align*}
where $\mathrm{proj}_{\Omega^{(0,1)}}$ is the obvious projection from $\Omega^1$ to $\Omega^{(0,1)}$. Moreover, defining $\mathrm{proj}_{\Omega^{(1,0)}}$ in direct analogy with $\mathrm{proj}_{\Omega^{(0,1)}}$, we denote
$
\del_{\F} := \big(\mathrm{proj}_{\Omega^{(1,0)}} \otimes \id\big) \circ  \nabla,
$
and call $\nabla$ the \emph{Chern connection} of Hermitian holomorphic module $(\mathcal{F},h_\F, \adel_{\mathcal{F}})$. This allows us to introduce a notion of positivity for a holomorphic Hermitian module. This directly generalises the classical notion of positivity, a property which is equivalent to ampleness \cite[Proposition 5.3.1]{HUY}. It was first introduced in \cite[Definition 8.2]{OSV} and requires a compatibility between Hermitian holomorphic modules and K\"ahler structures.

\begin{defn}\label{defn:positiveNegativeVB}
Let $\Omega^{\bullet}$ be a dc over a $*$-algebra $B$, and let $(\Omega^{(\bullet,\bullet)},\kappa)$ be a K\"ahler structure for $\Omega^{\bullet}$. 
An Hermitian  holomorphic module $(\mathcal{F},h_{\F}, \adel_{\mathcal{F}})$  is said to be \emph{positive}, written $\F > 0$, if there exists $\theta \in \mathbb{R}_{>0}$ such that the Chern connection $\nabla$ of ${\mathcal{F}}$ satisfies
\begin{align*}
\nabla^2(f) = -\theta \mathbf{i} L_{\F}(f) = -\theta \mathbf{i} f \otimes \kappa, & & \textrm{ for all } f \in \mathcal{F}.
\end{align*} 
Analogously, $(\mathcal{F}, h_{\F}, \adel_{\mathcal{F}})$ is said to be {\em negative}, written $\F <0$, if there exists $\theta \in \mathbb{R}_{>0}$ such that the Chern connection $\nabla$ of ${\mathcal{F}}$ satisfies
\begin{align*}
\nabla^2(f) = \theta \mathbf{i} L_{\F}(f) =  \theta \mathbf{i} f \otimes \kappa, & & \textrm{ for all } f \in \mathcal{F}.
\end{align*} 
\end{defn}
As established in \cite[\textsection 8]{OSV}, for positive Hermitian holomorphic modules we have a direct noncommutative generalisation of the Kodaira vanishing theorem. Moreover, as shown in the accompanying paper \cite[\textsection 3]{OSV}, twisting a noncommutative Dolbeault--Dirac operator by a negative Hermitian holomorphic module produces a spectral gap around zero.

\subsection{Inner Products, and Twisted Dolbeault--Dirac and Laplace Operators}

Let $B$ be a $*$-algebra, then a \emph{state} $\mathbf{f}:B \to \mathbb{C}$ is a $*$-map satisfying $\mathbf{f}(b^*b) \geq 0$, for all $b \in B$, and such that $f(1_B) = 1$. If in addition $f(b^*b) > 0$, for all non-zero $b$, then we say that $\mathbf{f}$ is \emph{faithful}. As observed in \cite[\textsection 5.2]{OSV}, an inner product is then given by 
\begin{align*}
\langle -, - \rangle_{\F} := \mathbf{f} \circ g_{\F}:  \Omega^{\bullet} \otimes_B \F  \times  \Omega^{\bullet} \otimes_B \F  \to \mathbb{C}.
\end{align*}
Moreover, we can associate to $\mathbf{f}$ an \emph{integral}
$
\int := \mathbf{f} \circ \ast_{\sigma}: \Omega^{2n} \to \mathbb{C}.
$
We say that $\int$ is \emph{closed} if the map
$
\int \circ \, \exd:\Omega^{n-1} \to \mathbb{C}
$
is equal to the zero map.

As shown in \cite[Proposition 5.15]{OSV}, for any Hermitian holomorphic module $(\F,\adel_{\F})$ over a K\"ahler structure $(B,\Omega^{\bullet},\Omega^{(\bullet,\bullet)},\sigma)$, with closed integral $\int$ with respect to a choice of faithful state, the twisted differentials $\del_{\F}$ and $\adel_{\F}$ are  adjointable with respect to $\langle -,-\rangle_{\F}$, with adjoints denoted by $\del_{\F}^{\dagger}$ and $\adel_{\F}^{\dagger}$ respectively. The {\em $\F$-twisted holomorphic} and \emph{anti-holomorphic Dolbeault--Dirac operators} are respectively defined to be
\begin{align*}
D_{\del_\F} := \del_\F + \del_\F^\dagger, & & D_{\adel_\F} :=  \adel_\F + \adel_\F^\dagger.
\end{align*}
Its  {\em $\F$-twisted  holomorphic} and \emph{anti-holomorphic Laplace operators} are respectively 
\begin{align*}
\DEL_{\del_\F} \coloneqq D_{\del_{\F}}^2, & & \DEL_{\adel_\F} \coloneqq D_{\adel_{\F}}^2.
\end{align*}
Adjointability of the operators $\del_{\F}$ and $\adel_{\F}$ immediately implies that $\nabla = \del_{\F} + \adel_{\F}$ admits an adjoint $\nabla^{\dagger}$, giving us respectively the  \emph{Chern--Dirac}, and \emph{Chern--Laplace, operators}
\begin{align*}
D_{\nabla} := \nabla + \nabla^{\dagger}, & & \Delta_{\nabla} := \nabla \circ \nabla^{\dagger} + \nabla^{\dagger} \circ \nabla.
\end{align*}
Using the Laplacians one can define \emph{harmonic elements} just as in the classical case: 
\begin{align*}
\mathcal{H}_{\nabla}^{\bullet} := \mathrm{ker}(\Delta_{\nabla}), & & \mathcal{H}_{\del_{\F}}^{\bullet} := \mathrm{ker}(\Delta_{\del_{\F}}), & & \mathcal{H}_{\adel_{\F}}^{\bullet} := \mathrm{ker}(\Delta_{\adel_{\F}}).
\end{align*}
As established in \cite[\textsection 6.1]{OSV}, if the twisted Dirac operator is diagonalisable, then we have a direct noncommutative generalisation of Hodge decomposition, giving a bijection between cohomology classes and harmonic forms. Moreover, in this case, classical Serre duality also carries over to the noncommutatve setting \cite[\textsection 6.2]{OSV}.

\subsection{Covariant Differential Calculi and Hermitian Structures} \label{subsection:covariantHS}

We begin by briefly recalling Takeuchi's equivalence for relative Hopf modules, see \cite[Appendix A]{SpectralGap} for more details. For $A$ a Hopf algebra, with coproduct $\Delta$, counit $\e$, and an invertible antipode $S$. We say that a left coideal subalgebra $B \subseteq A$ is a \emph{quantum homogeneous \mbox{$A$-space}} if $A$ is faithfully flat as a right $B$-module and  $B^+A = AB^+$, where $B^+ := \ker(\e) \cap B$. We denote by ${}^A_B\mathrm{mod}$ the category of (left) relative Hopf modules which are finitely generated as left $B$-modules, and by ${}^{\pi_B}\mathrm{mod}$ the category of finite-dimensional left comodules over the Hopf algebra $\pi_B(A) := A/B^+A$. An equivalence of categories, called Takeuchi's equivalence, is given by the functor $\Phi:{}^A_B\mathrm{mod} \to {}^{\pi_B}\mathrm{mod}$, where $\Phi(\F) = \F/B^+\F$, for any relative Hopf module $\F$, and the functor $\Psi:{}^{\pi_B}\mathrm{mod} \to {}^A_B\mathrm{mod}$, defined by $\Psi(V) = A \square_{\pi_B} V$, where $\square_{\pi_B}$ is the cotensor product over $\pi_B(A)$. A unit for the equivalence is given by $\unit: \F \to (\Psi \circ \Phi)(\F)$, where $\unit(f) := f_{(-1)} \otimes [f_{(0)}]$, with $[f_{(0)}]$ denoting the coset of $f_{(0)}$ in $\Phi(\F)$. 

Consider next the category ${}^A_B\mathrm{mod}_0$ of \emph{relative Hopf modules}, consisting of objects $\F$ in ${}^A_B\textrm{mod}$ endowed with the right $B$-module structure defined by $fb = f_{(-2)}bS(f_{(-1)})f_{(0)}$, for $b \in B$, and $f \in \F$. We note that this immediately implies the identity 
\begin{align} \label{eqn.ModZeroRight}
bf = f_{(0)}S^{-1}(f_{(-1)})bf_{(-2)}, & & \textrm{ for all } b \in B, \, f \in \F.
\end{align}
Moreover, we note that
\begin{align} \label{eqn:cosetModue}
[bf] = [fb] = \e(b), & & \textrm{ for } \F \in {}^A_B\mathrm{mod}_0, \, b \in B.
\end{align}

This category is clearly equivalent to ${}^A_B\textrm{mod}$. The category ${}^A_B\mathrm{mod}_0$ comes equipped with an evident monoidal structure $\otimes_B$, moreover, the category ${}^{\pi_B}\mathrm{mod}$ comes with the usual tensor product of comodules. For $\F,\mathcal{G} \in {}^A_B\mathrm{mod}_0$, the natural isomorphism 
\begin{align*}
\mu_{\F,\mathcal{G}}:\Phi(\F) \otimes \Phi(\mathcal{G}) \to  \Phi(\F \otimes_B \mathcal{G}), & & [f] \otimes [g] \mapsto [f \otimes g].
\end{align*}
makes Takeuchi's equivalence into a monoidal equivalence. See \cite[\textsection 4]{MMF2} for further details.

For any $\F \in {}^A_B\mathrm{mod}_0$, the $B$-bimdoule $\F^{\vee}$ is a right dual to $\F$ in the monoidal category of $B$-bimodules. Note next that in the category ${}^{\pi_B}\mathrm{mod}$, the dual comodule $V^{\vee} \in {}^{\pi_B}\mathrm{mod}$ is a right dual to $V$. Hence, from the monoidal version of Takeuchi's equivalence, we see that $\Psi(\Phi(\F)^{\vee})$ is also a right dual to $\F$. It now follows from uniqueness of duals in the monoidal category of $B$-bimodules that  $\F^{\vee}$ is isomorphic to $\Psi(\Phi(\F)^{\vee})$, endowing it with the structure of an object in ${}^{A}_B\mathrm{mod}_0$. In what follows, we will tacitly identity these two objects.

Let us now assume that $B$ is a $*$-algebra. An \emph{covariant Hermitian module} over $B$ is an Hermitian module $(\F, h_\F)$ such that $\F$ is an object in ${}^A_B\mathrm{mod}_0$ and the isomorphism 
$
h_{\F}: \overline{\F} \to \, \F^{\vee}
$
is a morphism in ${}^A_B\textrm{mod}_0$, where the conjugate module $\overline{\F}$ is understood as an object in ${}^A_B\textrm{mod}_0$ in the obvious way. 

We say that a dc $\Omega^{\bullet}$ over $B$ is \emph{covariant} if the coaction $\Delta_R:B \to A \otimes B$ extends to a (necessarily unique) comodule algebra structure $\Delta_R: \Omega^{\bullet} \to A \otimes \Omega^{\bullet}$, with respect to which the diﬀerential $\exd$ is a left $A$-comodule map. Note that any covariant dc, which is finitely generated as a left $B$-module, is naturally an object in the category ${}^A_B\mathrm{mod}$. In this article, we will \emph{always} assume that the left $B$-module structure of $\Omega^{\bullet}$ gives it the structure of an object in the category ${}^A_B\mathrm{mod}_0$. A connection $\nabla:\F \to \Omega^1 \otimes_B \F$ is said to be \emph{covariant} if it is a left $A$-comodule map.

Consider next a covariant dc $\Omega^{\bullet}$ over $B$, endowed with a \emph{covariant} complex structure $\Omega^{(\bullet,\bullet)}$, which is to say, one for which the $\mathbb{Z}^2_{\geq 0}$-decomposition of the calculus is a decomposition in the category of relative Hopf modules. A \emph{covariant holomorphic module} is a holomorphic module $(\F,\adel_{\F})$ over $B$, such that $\F$ is an object in ${}^A_B\textrm{mod}_0$ and $\adel_{\F}:\F \to  \Omega^{(0,1)}  \otimes_B \F$ is a covariant connection. An Hermitian holomorphic module $(\F, h_\F, \del_F)$ is said to be \emph{covariant} if its constituent Hermitian and holomorphic modules are covariant. In this case, the Chern connection is always covariant, see \cite[\textsection 7.1]{OSV}.

A {\em covariant Hermitian} structure for $\Om^\bullet$  is an Hermitian structure $(\Om^{(\bullet,\bullet)},\s)$ such that $\Om^{(\bullet,\bullet)}$ is a covariant complex structure, and the Hermitian form $\sigma$ is left $A$-coinvariant, which is to say $\DEL_R(\s) = 1 \otimes \s$. A {\em covariant K\"ahler} structure is a covariant Hermitian structure that is also a K\"ahler structure.  Note that in the covariant case, in addition to being $B$-bimodule maps, $L_{\sigma}$, $\ast_\s$, and $\Lambda_{\sigma}$ are also left $A$-comodule maps. 

\begin{eg}
Let $(\Omega^{(\bullet,\bullet)}, \s)$ be a positive definite covariant Hermitian structure for a $*$-dc over a quantum homogeneous space $B$ that is also a $*$-algebra. A covariant Hermitian module structure for 
$\Omega^{\bullet}$ is given by the map
\begin{align*}
\overline{\Omega^{\bullet}} \to (\Omega^{\bullet})^{\vee}, & & \overline{\omega} \mapsto \ast_{\sigma}(\ast_{\sigma}(\omega^*) \wedge - ),
\end{align*}
where $\ast_{\sigma}(\ast_{\sigma}(\omega^*) \wedge - )$ acts on an element of $\Omega^{\bullet}$ in the obvious way.
\end{eg}


\section{Compact Quantum Homogeneous K\"ahler Spaces}

In this section we introduce the main object of study in this paper, mainly the notion of a compact quantum homogeneous K\"ahler space, and more generally, the Hermitian analogue. This gives a natural set of compatibility conditions between covariant Hermitian structures and Woronowicz's theory of  compact quantum groups. It has a natural Hilbert space completion together with a rich family of geometrically motivated bounded operators. We refer to Appendix \ref{app:CQGs} for basic definitions and notation for compact quantum groups (CQGs) and compact quantum group algebras (CQGAs).


\subsection{Square Integrable Forms} \label{subsection:HilbertL2Omega}

In this subsection we introduce the Hilbert space of square integrable forms for a positive definite Hermitian structure $(\Omega^{(\bullet,\bullet)},\sigma)$ over a $*$-algebra $B$, twisted by an Hermitian module $\F$. For a faithful state $\textbf{f}: B \to \mathbb{C}$, an inner product  $\langle -,-\rangle_{\F}$ is given by $\textbf{f} \circ g_{\F}$. We denote by $L^2\left(\Om^{\bullet} \otimes_B \F \right)$ the Hilbert space completion of $\Omega^{\bullet} \otimes_B \F$ with respect to $\langle -, -\rangle_{\F}$, and call it the {\em Hilbert space of square integrable twisted forms of } $\Omega^{\bullet} \otimes_B \F$.

We note that the following Hilbert space decompositions  
\begin{align*} 
L^2(\Omega^{\bullet} \otimes_B \F) \cong \bigoplus_{(a,b) \in \mathbb{Z}^2_{\geq 0}} L^2(\Om^{(a,b)} \otimes_B  \F), & &  L^2(\Om^{\bullet} \otimes_B  \F) \cong \bigoplus_{j \in \mathbb{Z}_{\geq 0}}  L^2\big(L_{\sigma}^j(P^{(2n-2j} \otimes_B \F)\big)
\end{align*}
follow from  the fact that the $\mathbb{Z}^2_{\geq 0}$-decomposition, and the Lefschetz decomposition, of $\Omega^\bullet \otimes_B \F$ are orthogonal with respect to $g_{\sigma}$ (as discussed in \textsection \ref{subsection:HandKS})), and hence are orthogonal with respect to the associated inner product. Moreover, if the twisted Dirac operator $D_{\adel_{\F}}$ is diagonalisable, then the additional decomposition
\begin{align*}
L^2(\Omega^{\bullet} \otimes_B \F) \cong L^2\!\Big(\adel_{\F}(\Omega^{\bullet} \otimes_B \F)\Big) \oplus L^2\!\left(\adel^{\dagger}_{\F}(\Omega^{\bullet} \otimes_B \F)\right) \oplus L^2(\mathcal{H}_{\adel_{\F}})
\end{align*}
follows from the orthogonality of Hodge decomposition \cite[Theorem 6.4]{OSV}.

\subsection{CQH-Hermitian Spaces}

We now introduce the class of quantum homogeneous spaces that concern us in this paper. Note first that for any quantum homogeneous space $B \subseteq A$, where $A$ is a Hopf $*$-algebra, and $B$ is a $*$-subalgebra of $A$, the quotient Hopf algebra $A/B^+A$ inherits a Hopf $*$-algebra structure.  A {\em CQGA-homogeneous space} is a quantum homogeneous space $B \subseteq A$ such that $A$ and $\pi_B(A)$ are CQGAs, $B$ is a $*$-subalgebra of $A$, and $\pi_B$ is a Hopf $*$-algebra map. It follows that $A$ is faithfully flat as a right $B$-module, as explained, for example, in \cite[\textsection 3.3]{DOKSS}.

\begin{defn}
A \emph{compact quantum  homogeneous Hermitian space}, or simply a \emph{CQH-Hermitian space}, is a quadruple $\mathbf{H} := \big(B \subseteq A, \Omega^\bullet, \Omega^{(\bullet,\bullet)},\sigma \big)$  where
\begin{enumerate}

\item $B \subseteq A$  is a CQGA-homogeneous space,

\item $\Omega^\bullet$ is a left $A$-covariant $*$-dc over $B$, 

\item $\big(\Omega^{(\bullet,\bullet)},\sigma\big)$ is a left $A$-covariant, $\int$-closed, positive definite Hermitian structure for  \mbox{ $\Omega^\bullet$}.

\end{enumerate}
A \emph{compact quantum  homogeneous K\"ahler space}, or simply a  \emph{CQH-K\"ahler space} is a CQH-Hermitian space whose constituent Hermitian holomorphic structure is a K\"ahler structure.
\end{defn}

Since the Haar state $\haar$ of $A$ is a faithful state, we can follow the approach of \textsection \ref{subsection:HilbertL2Omega} and form inner products from twisted metrics, and moreover consider the associated Hilbert space completions.  We now give a sufficient condition for separability of such Hilbert spaces.

\begin{prop} \label{prop:separable}
For a quantum homogeneous space $\mathbf{H} = (B \subseteq A, \Omega^{\bullet}, \Omega^{(\bullet,\bullet)},\sigma)$, and a covariant Hermitian module $\F$, if $A$ is finitely generated, then the Hilbert space $L^2(\Omega^{\bullet} \otimes_B \F  )$ is separable.
\end{prop}
\begin{proof}
Since $A$ is finitely generated, it admits a countable Hamel basis. Thus the tensor product $A \otimes \Phi(\Omega^{\bullet} \otimes_B \F)$, as well as the subspace $A \square_{\pi_B} \Phi(\Omega^{\bullet} \otimes_B \F )$, admit such a basis. Since $\Omega^{\bullet} \otimes_B \F $ is isomorphic to the cotensor product $A \square_{\pi_B} \Phi(\Omega^{\bullet} \otimes_B \F)$, it also admits a countable basis. Thus we can conclude that $L^2(\Omega^{\bullet} \otimes_B \F )$ is separable.
\end{proof}

We finish this subsection with the observation that in the covariant case, weak Hermitian  modules automatically satisfy the stronger requirement of an Hermitian module.

\begin{prop}
Let $\F \in {}^A_B\mathrm{mod}_0$, and let $h_{\F}: \overline{\F} \to \F^{\vee}$ be a covariant weak Hermitian structure for $\F$. Then $h_{\F}$ is automatically an Hermitian structure.
\end{prop}
\begin{proof}
Since $h_{\F}$ is a weak Hermitian structure by assumption, it follows from \cite[Proposition 4.3]{SpectralGap} that an inner product is given by 
\begin{align*}
\langle -,-\rangle_{\e,\F}: \Phi(\F) \times \Phi(\F) \to \mathbb{C}, & & ([f],[k]) \mapsto \e(h'_{\F}(f,k)).
\end{align*}
In particular, there exist objects $f_i$ in $\F$ such that $[f_i]$ is an orthonormal basis of $\Phi(\F)$ with respect to $\langle -,-\rangle_{\e,\F}$. Analogously, we have objects $\omega_i$ in $\Omega^{\bullet}$ such that $[\omega_i]$ is an orthonormal basis of $\Phi(\Omega^{\bullet})$ with respect to the inner product $\langle -,-\rangle_{\e,g_{\sigma}})$. Then consider a general element $\sum_i [\omega_i] \otimes [f_i]$ in $\Phi(\Omega^{\bullet}) \otimes \Phi(\F)$, and take the product
\begin{align*}
\e\Big(g_{\F}(\sum_i \omega_i \otimes f_i, \sum_i \omega_i \otimes f_i)\Big) = \e(h_{\F}'(f_i, g_{\sigma}(\omega_i,\omega_j)f_j).
\end{align*}
Recalling from \eqref{eqn.ModZeroRight} the form of the left $B$-module structure of $\F$ as an object in ${}^A_B\mathrm{mod}_0$, as well as  right linearity of $h_{\F}$, we see that this is in turn equal to s
\begin{align*}
\e\Big(h_{\F}'\big(f_i,(f_j)_{(0)}\big)\Big) \e\Big(S^{-1}\big((f_j)_{(-1)}\big)g_{\sigma}(\omega_i,\omega_j)^*(f_j)_{(-2)}\Big).
\end{align*}
Simplifying we arrive at the expression
\begin{align*}
\e\Big(h_{\F}'(f_i,f_j)\Big) \e\Big(g_{\sigma}(\omega_i,\omega_j)\Big)^* = 1,
\end{align*}
where for the last equality we have used our assumption that the sets $\{[\omega_i]\}_i$ and $\{[f_j]\}_j$ are orthonormal bases for their respective inner product spaces. It now follows that $\e \circ g_{\F}$ is an inner product, and so, from  \cite[Proposition 4.3]{SpectralGap} we see that $g_{\F}$ is an Hermitian structure, meaning in turn that $h_{\F}$ is Hermitian.
\end{proof}


\subsection{Morphisms as Bounded Operators} \label{subsection:Boundedsl2}

In this subsection, for a CQH-Hermitian space $\mathbf{H} := \big(B \subseteq A, \Omega^\bullet, \Omega^{(\bullet,\bullet)},\sigma \big)$, and a covariant Hermitiam module $\F$, we discuss the extension of endomorphisms of $\Omega^{\bullet} \otimes_B \F $ to bounded operators on $L^2(\Omega^{\bullet} \otimes_B \F)$. As an application, we produce a bounded representation of $\frak{sl}_2$.

Consider the map
$
g_{\F,\unit}:  A \, \square_{\pi_B} \Phi(\Omega^{\bullet} \otimes_B \F )  \times  A \, \square_{\pi_B} \Phi(\Omega^{\bullet} \otimes_B \F)  \to B,
$
defined by 
$$
g_{\F,\unit}\!\left(\sum_i a_i \otimes [\alpha_i], \sum_j a'_j \otimes [\alpha'_j]\right)  := \sum_{i,j} a^*_ia'_j \e(g_{\F}(\alpha_i,\alpha'_j)). 
$$

The composition $\mathbf{h} \circ g_{\F,\unit}$ is an inner product for  $A \, \square_{\pi_B}\Phi(\Omega^{\bullet} \otimes_B \F)$. Moreover, it follows from \cite[Lemma 4.1]{SpectralGap} that the unit map
$
\unit: \Omega^{\bullet} \otimes_B \F   \to  A \, \square_{\pi_B}\Phi(\Omega^{\bullet} \otimes_B \F)
$
is an isometry.

\begin{prop} \label{prop:boundedmorph}
Every morphism $f:\Omega^{\bullet} \otimes_B \F   \to \Omega^{\bullet} \otimes_B \F$ in ${}^A_B\mathrm{mod}_0$ is bounded, and hence extends to a bounded operator on $L^2(\Omega^{\bullet} \otimes_B \F )$. 
\end{prop}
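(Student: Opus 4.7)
The plan is to reduce the boundedness question, via Takeuchi's equivalence and Corollary~\ref{cor:TakUnit}, to an elementary statement about a tensor-product operator on a Hilbert space, exploiting the finite-dimensionality of the local fibre $\Phi(\Omega^{\bullet})$.

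First, since $\Omega^{\bullet}$ is by assumption an object in ${}^A_B\mathrm{mod}_0$, its image $\Phi(\Omega^{\bullet})$ under Takeuchi's equivalence is a finite-dimensional object in ${}^H\mathrm{mod}$. Applying $\Phi$ to the $A$-comodule map $f$ produces a linear endomorphism $\Phi(f): \Phi(\Omega^{\bullet}) \to \Phi(\Omega^{\bullet})$ in ${}^H\mathrm{mod}$, which is automatically bounded on the finite-dimensional inner product space $(\Phi(\Omega^{\bullet}), (\cdot,\cdot))$ of Corollary~\ref{cor:localinnerproduct}; denote its operator norm by $M$.

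Second, under the isomorphism $\unit$, the map $f$ is intertwined with the restriction of the tensor-product operator $\id_A \otimes \Phi(f)$ to the cotensor product $A \, \square_H \Phi(\Omega^{\bullet}) \subseteq A \otimes \Phi(\Omega^{\bullet})$. Since $\Phi(f)$ is a morphism of $H$-comodules, it commutes with the $H$-coaction, and therefore $\id_A \otimes \Phi(f)$ preserves the defining equaliser of the cotensor product. By the definition of $\langle \cdot,\cdot \rangle_{\unit}$ as the restriction of the tensor product inner product $\langle \cdot,\cdot\rangle_{\haar} \otimes (\cdot,\cdot)$, the operator $\id_A \otimes \Phi(f)$ extends to a bounded operator of norm $M$ on the Hilbert tensor product $L^2(A, \haar) \otimes \Phi(\Omega^{\bullet})$. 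It therefore restricts to a bounded operator on the closed subspace $L^2\!\left(A \, \square_H \Phi(\Omega^{\bullet})\right)$ with norm at most $M$.

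Finally, Corollary~\ref{cor:TakUnit} provides an isometric Hilbert space isomorphism between $L^2(\Omega^{\bullet})$ and $L^2\!\left(A \, \square_H \Phi(\Omega^{\bullet})\right)$ implemented by $\unit$, so transporting the bound through this isomorphism yields a bounded extension of $f$ to $L^2(\Omega^{\bullet})$ with operator norm at most $\|\Phi(f)\|$. I do not anticipate a serious obstacle: the essential input is the finite-dimensionality of the local fibre $\Phi(\Omega^{\bullet})$ guaranteed by the assumption $\Omega^{\bullet} \in {}^A_B\mathrm{mod}_0$, together with the compatibility of the inner products under Takeuchi's equivalence. As a sanity check, this argument naturally produces the quantitative bound $\|f\| \leq \|\Phi(f)\|$, which will be useful for subsequent applications to the $\mathfrak{sl}_2$ and $U_p(\mathfrak{sl}_2)$ representations on $L^2(\Omega^{\bullet})$.
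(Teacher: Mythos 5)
Your proof is correct and follows essentially the same route as the paper: transport $f$ through the unit of Takeuchi's equivalence (which is an isomorphism of inner product spaces by Corollary \ref{cor:TakUnit}), identify it with $\id \otimes \Phi(f)$ on $A \,\square_H \Phi(\Omega^{\bullet})$, and use finite-dimensionality of $\Phi(\Omega^{\bullet})$ to conclude boundedness. The extra details you supply (that $\id \otimes \Phi(f)$ preserves the cotensor product and the quantitative bound $\|f\| \leq \|\Phi(f)\|$) are harmless refinements of the paper's argument rather than a different approach.
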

\begin{proof}
Consider the commutative diagram given by Takeuchi's equivalence
\begin{align*}
\xymatrix{ 
\Omega^{\bullet} \otimes_B \F           \ar[d]_{\unit}           \ar[rrr]^{f}       & & &     \Omega^{\bullet} \otimes_B \F    \\
A \square_{\pi_B} \big(\Phi(\Omega^\bullet) \otimes \Phi(\F)\big)  \,\,   \ar[rrr]_{\Psi \circ \Phi(f)}                      & & &  A \square_{\pi_B} \Phi(\Omega^\bullet).   \ar[u]_{\unit^{-1}} \\
}
\end{align*}
Since $\unit$ is an isometry, the morphism $f$ is bounded if and only if $\Psi \circ \Phi(f)$ is bounded. But $\Psi \circ \Phi(f) = \Phi(f) \otimes \id$, and $\Phi(\Omega^{\bullet} \otimes_B \F)$ is finite-dimensional by assumption, implying that $\Phi(f) \otimes \id$ is bounded. Hence $f$ is bounded and extends to a bounded operator on $L^2(\Omega^{\bullet} \otimes_B \F)$.
\end{proof}

\begin{cor} \label{cor:sl2rep}
The maps $\id_{\F} \otimes L_{\sigma},\, \id_{\F} \otimes \Lambda_{\sigma}$, and $\id_{\F} \otimes H$ extend to bounded operators on $L^2(\Omega^{\bullet} \otimes_B \F)$. Hence, a representation $T:\frak{sl}_2 \to \mathbb{B}\!\left(L^2(\Omega^{\bullet} \otimes_B \F)\right)$ is given by 
\begin{align*}
T(e) = L_{\sigma} \otimes \id_{\F}, & &
T(h) = H \otimes \id_{\F}, & & 
T(f) = \Lambda_{\sigma} \otimes \id_{\F}.
\end{align*}
The space of lowest weight vectors of the representation is given by $L^2(P^\bullet \otimes_B \F)$, the Hilbert space completion of the primitive forms. 
\end{cor}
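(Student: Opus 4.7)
The plan is to verify the three claims in turn, relying on Proposition \ref{prop:boundedmorph} for boundedness, on Proposition \ref{prop:LIDS} for the commutation relations, and on the Peter--Weyl decomposition of Proposition \ref{prop:PWProp} for the identification of the lowest weight vectors.

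First, I would check that each of $L_{\sigma}$, $\Lambda_{\sigma}$, and $H$ is a morphism of $\Om^{\bullet}$ in the category ${}^A_B\mathrm{mod}_0$. For $L_{\sigma}$, this is immediate: since $\mathbf{H}$ is a CQH-Hermitian space, the Hermitian form $\sigma$ is central and left $A$-coinvariant, so wedging with $\sigma$ is both a $B$-bimodule map and a left $A$-comodule map. For the counting operator $H$, this is trivial since $H$ acts as a scalar on each homogeneous component $\Omega^k$, and the $\mathbb{N}_0$-grading is compatible with both the $B$-bimodule and $A$-comodule structures. For $\Lambda_{\sigma}$, recall from \eqref{eqn:dualLefschetz} that $\Lambda_{\sigma} = \ast_{\s}^{-1} \circ L_{\sigma} \circ \ast_{\s}$; since the Hodge map is a $B$-bimodule morphism and, in the covariant setting as recalled in \S \ref{subsection:covariantHS}, a left $A$-comodule morphism, it follows that $\Lambda_{\sigma}$ is also a morphism in ${}^A_B\mathrm{mod}_0$. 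Proposition \ref{prop:boundedmorph} then implies that all three operators extend to bounded operators on $L^2(\Om^{\bullet})$.

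Next, since the commutator relations of Proposition \ref{prop:LIDS} hold on the dense subspace $\Omega^{\bullet} \subseteq L^2(\Omega^{\bullet})$, and each of $L_{\sigma}, \Lambda_{\sigma}, H$ is bounded, these relations extend by continuity to the whole Hilbert space. This immediately gives the asserted representation $T: \mathfrak{sl}_2 \to \mathbb{B}(L^2(\Om^\bullet))$.

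Finally, I would identify the lowest weight vectors, that is, the kernel of $T(F) = \Lambda_{\sigma}$ on $L^2(\Omega^{\bullet})$. Since $\Lambda_{\sigma}$ is a left $A$-comodule map, it is a Peter--Weyl map in the sense of \S \ref{subsection:PeterWeyl}, hence preserves each finite-dimensional isotypical summand $\Omega^{\bullet}_V$ of the Peter--Weyl decomposition. On the algebraic level, the kernel of $\Lambda_{\sigma}$ is by definition the space $P^{\bullet}$ of primitive forms. Since the Peter--Weyl decomposition of $\Omega^{\bullet}$ is orthogonal with respect to $\langle \cdot, \cdot \rangle_{\sigma}$ by Proposition \ref{prop:PWProp}, and each summand is finite-dimensional (so closed in $L^2$), the kernel of the bounded operator $\Lambda_\sigma$ on $L^2(\Omega^\bullet)$ decomposes as
\begin{align*}
\ker\!\big(\Lambda_{\sigma}:L^2(\Omega^{\bullet}) \to L^2(\Omega^{\bullet})\big) = \bigoplus_{V \in \widehat{A}} \ker\!\big(\Lambda_{\sigma}|_{\Omega^{\bullet}_V}\big) = \bigoplus_{V \in \widehat{A}} P^{\bullet}_V,
\end{align*}
where $P^{\bullet}_V := P^{\bullet} \cap \Omega^{\bullet}_V$. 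Taking Hilbert space completion of the right-hand side yields $L^2(P^\bullet)$, as required. The only mildly delicate point here is the orthogonal Peter--Weyl decomposition of the kernel of a bounded Peter--Weyl operator; this is straightforward but is really the one place where the finite-dimensionality of the isotypical components (encoded in the CQH-Hermitian axioms via Takeuchi's equivalence) is essential.
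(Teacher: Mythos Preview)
Your proof is correct and follows essentially the same approach as the paper, which gives only a two-line argument citing Propositions~\ref{prop:boundedmorph} and~\ref{prop:LIDS} and does not even address the lowest weight vector claim. One small slip: the identification $\ker(\Lambda_{\sigma})=P^{\bullet}$ on $\Omega^{\bullet}$ is not literally ``by definition'' (primitive forms are defined via $L_{\sigma}^{n-k+1}\alpha=0$), but it is a standard consequence of the Lefschetz decomposition together with the $\mathfrak{sl}_2$ relations, so your argument goes through.
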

\begin{proof}
Since $L_{\sigma} \otimes  \id_{\F}, ~ \Lambda_{\sigma} \otimes  \id_{\F}$, and $H \otimes  \id_{\F}$ are all morphisms in ${}^A_B\mathrm{mod}_0$, Proposition \ref{prop:boundedmorph} implies that they extend to bounded operators on $L^2(\Omega^{\bullet} \otimes_B \F )$. It now follows from the $\frak{sl}_2$-representation given in \cite[Corollary 5.14]{MMF3} that we get a bounded Lie algebra representation of $\frak{sl}_2$.
\end{proof}

\begin{cor}
The Hodge map $\ast_{\sigma} \otimes  \mathrm{id}$ extends to a unitary operator on $L^2(\Omega^{\bullet} \otimes_B \F)$.
\end{cor}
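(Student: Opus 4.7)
The plan is to deduce boundedness from the Takeuchi-equivalence argument already used for Corollary \ref{cor:sl2rep}, and then verify unitarity by combining the involutivity $\ast_{\sigma}^2 = (-1)^k \id$ (on $\Omega^k$) with an explicit computation showing that $\ast_{\sigma}$ preserves the inner product $\langle \cdot,\cdot\rangle_\sigma$.

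First, by covariance of the Hermitian structure (see \textsection \ref{subsection:covariantHS}), the Hodge map $\ast_\sigma$ is simultaneously a $B$-bimodule map and a left $A$-comodule map, that is, it is a morphism in $^A_B\mathrm{mod}_0$. Proposition \ref{prop:boundedmorph} then immediately yields that $\ast_\sigma$ extends to a bounded operator on $L^2(\Omega^\bullet)$. Moreover, Proposition \ref{prop:Hodgeproperties} provides $\ast_\sigma^2 \omega = (-1)^k \omega$ for $\omega \in \Omega^k$, showing that the extension is invertible with bounded inverse on each homogeneous component.

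Next, I would show that $\langle \ast_\sigma \omega, \ast_\sigma \nu\rangle_{\sigma} = \langle \omega, \nu \rangle_{\sigma}$ for all homogeneous $\omega, \nu \in \Omega^k$. Unpacking the definition of $g_\sigma$, using that $\ast_\sigma$ is a $*$-map and that $\ast_\sigma^2 = (-1)^k$ on $\Omega^k$, one computes
\[
\langle \ast_\sigma \omega, \ast_\sigma \nu \rangle_\sigma = \haar\!\bigl(\ast_\sigma(\ast_\sigma\omega \wedge \ast_\sigma\ast_\sigma(\nu^*))\bigr) = (-1)^k \haar\!\bigl(\ast_\sigma(\ast_\sigma\omega \wedge \nu^*)\bigr).
\]
On the other hand, conjugate symmetry of $g_\sigma$ (Corollary \ref{cor:conjsym}) together with the facts that $\haar$ is a state and that the $*$-structure on $\Omega^\bullet$ satisfies $(\alpha \wedge \beta)^* = (-1)^{|\alpha||\beta|}\beta^* \wedge \alpha^*$ gives
\[
\langle \omega, \nu\rangle_\sigma = \overline{\haar\!\bigl(\ast_\sigma(\nu \wedge \ast_\sigma \omega^*)\bigr)} = (-1)^{k(2n-k)}\haar\!\bigl(\ast_\sigma(\ast_\sigma\omega \wedge \nu^*)\bigr).
\]
Since $k(2n-k) \equiv k \pmod 2$, the two expressions agree, establishing inner product preservation on $\Omega^k$. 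Orthogonality of the $\mathbb{N}_0$-grading under $\langle \cdot,\cdot\rangle_\sigma$ (Lemma \ref{lem:orthogwrtg}) together with the fact that $\ast_\sigma$ preserves this grading (up to shifting $\Omega^k \mapsto \Omega^{2n-k}$) then extend the identity to all of $\Omega^\bullet$ by bilinearity.

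Finally, inner product preservation on the dense subspace $\Omega^\bullet$ extends by continuity to $L^2(\Omega^\bullet)$, so the bounded extension of $\ast_\sigma$ is an isometry. Combined with its invertibility (from $\ast_\sigma^2 = \pm \id$), this gives unitarity. The only subtle step is the sign-bookkeeping in the middle computation; once the congruence $k(2n-k) \equiv k \pmod 2$ is noted, the rest is routine.
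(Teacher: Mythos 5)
Your proof is correct and follows essentially the same route as the paper: boundedness from Proposition \ref{prop:boundedmorph} applied to the morphism $\ast_{\sigma}$ in $^A_B\mathrm{mod}_0$, together with unitarity of $\ast_{\sigma}$ on $\Omega^{\bullet}$, extended to $L^2(\Omega^{\bullet})$ by density. The only difference is that the paper simply cites the form-level isometry from \cite[Lemma 5.10]{MMF3}, whereas you re-derive it directly from the $*$-map property, $\ast_{\sigma}^2 = (-1)^k$, and conjugate symmetry of $g_{\sigma}$; your sign bookkeeping, $k(2n-k) \equiv k \pmod 2$, checks out.
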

\begin{proof}
Since $\ast_{\sigma}$ is a morphism in ${}^A_B\textrm{mod}_0$, it follows from Proposition \ref{prop:boundedmorph} that it is bounded on $\Omega^{\bullet}$. Unitarity of $\ast_{\sigma}$ as an operator on $\Omega^{\bullet}$ now follows from  \cite[Lemma 5.10]{MMF3}. Thus it follows from the definition of $g_{\F}$ that $\mathrm{id} \otimes \ast_{\sigma}$ acts as a unitary operator on $\Omega^{\bullet} \otimes_B \F$, and hence extends to a bounded operator on $L^2(\Omega^{\bullet} \otimes_B \F)$.
\end{proof}

\begin{cor}
The curvature operator $\nabla^2:\Omega^{\bullet} \otimes_B \F  \to \Omega^{\bullet} \otimes_B \F $ extends to a bounded operator on $L^2(\Omega^{\bullet} \otimes_B \F)$.
\end{cor}
\begin{proof}
Since $\nabla^2$ is a left $B$-module, and hence a morphism in ${}^A_B\textrm{mod}_0$, it follows from Proposition \ref{prop:boundedmorph} that it is bounded on $\Omega^{\bullet} \otimes_B \F $, and hence extends to a bounded operator on the Hilbert space.
\end{proof}

\begin{cor}
For a covariant bimodule connection $(\nabla,\sigma)$, the associated bimodule map $\sigma: \F \otimes_B \Omega^1 \rightarrow \Omega^{1} \otimes_B \F$ is a bounded operator.
\end{cor}
\begin{proof}
Since $\sigma(f \otimes \exd b) = \nabla(fb)-\nabla(f)b$, and $\nabla$ is by assumption a comodule map, we see that $\sigma$ must be a left $A$-comodule map. Thus since it is also a $B$-bimodule map, it must be a morphism in ${}^A_B\mathrm{mod}_0$. It now follows from Proposition \ref{prop:boundedmorph} that $\sigma$ is bounded.
\end{proof}


\subsection{Bounded Multiplication Maps} \label{subsection:BoundMultMaps}

Let $\mathbf{H} = (B,\Omega^{\bullet},\Omega^{(\bullet,\bullet)},\sigma)$ be a CQH-Hermitian space, and $\FF$ a covariant Hermitian holomorphic module. In this subsection we produce bounded operators on $\Omega^{\bullet}  \otimes_B \F$ using left multiplication by forms.

\begin{prop}\label{prop:formmult}
Assume that $\FF$ is a covariant Hermitian module, then for any element $\omega \in \Om^{\bullet}$, a bounded linear operator is given by 
\begin{align*}
L_\omega: \Omega^{\bullet}  \otimes_B \F  \to \Omega^{\bullet} \otimes_B \F,  & &  f \otimes \nu \mapsto  \omega \wedge \nu \otimes f.
\end{align*}
Hence it admits an extension $L_{\omega}$ to a bounded linear operator on $L^2(\Omega^{\bullet} \otimes_B \F)$.
\end{prop}
\begin{proof}
For any $\omega \in \Omega^1$, \, $b \in B^+$, and $\nu \otimes f \in \Omega^{\bullet} \otimes_B \F$, it follows from \eqref{eqn:cosetModue} that
\begin{align*}
[\omega \wedge b\nu] \otimes [f] = [\omega b \wedge \nu] \otimes [f] = \e(b)[\omega \wedge \nu] \otimes [f]  =  0. 
\end{align*}
Thus a well-defined linear map is given by 
\begin{align*}
l_{\omega}: \Phi(\Omega^{\bullet}) \otimes \Phi(\F) \to \Phi(\Omega^{\bullet})  \otimes \Phi(\F), & & [\nu] \otimes [f] \mapsto [\omega \wedge \nu] \otimes [f].
\end{align*}
Since $\Phi(\Omega^{1}) \otimes \Phi(\F)$ is finite-dimensional, $l_{\omega}$ is a bounded operator. Thus, recalling the multiplicative representation $\lambda_A:A \to \mathbb{B}(L^2(A))$ introduced in Appendix \ref{app:CQGs}, we see that a bounded operator on  $\Phi(\Omega^{\bullet} \otimes_B \F) \otimes A$ is given by 
$
\lambda_A(a) \otimes l_{\omega},  \text{ for all } a \in A, \,\nu \in \Omega^\bullet.
$


With this result in hand, we now show that $L_{\omega}$ is a bounded linear operator. For any element $\nu \otimes f$ of $\Omega^{\bullet}  \otimes_B \F$, we see that 
\begin{align*}
\unit \circ L_{\omega} \circ \unit^{-1}\left(\nu_{(-1)}f_{(-1)} \otimes [\nu_{(0)}] \otimes [f_{(0)}]\right) 
\end{align*}
simplifies to the expression 
\begin{align*}
\unit \circ L_{\omega}(\nu \otimes f)  = & \, \unit(\omega \wedge \nu \otimes f)\\
= & \, \omega_{(-1)}\nu_{(-1)}f_{(-1)} \otimes [\omega_{(0)} \wedge \nu_{(0)}] \otimes [f_{(0)}]\\
= & \, ((\lambda_{A}(\omega_{(-1)}) \otimes l_{\omega_{(0)}})(\nu_{(-1)}f_{(-1)} \otimes [\nu_{(0)}] \otimes [f_{(0)}]).
\end{align*}
Since every element of $A \square_{\pi_B} \Phi(\Omega^\bullet) \otimes \Phi(\F)$ is a sum of elements of the form $\unit(\nu \otimes f)$, for some $\nu \otimes f \in \Omega^1 \otimes_B \F$, we see that $\unit \circ L_{\w} \circ \unit^{-1}$ is bounded. It now follows from  the fact that $\unit$ is an isometry, that $L_{\omega}$ is a bounded operator. 


Finally, we observe that since $\Omega^{\bullet}  \otimes_B \F$ is dense in $L^2(\Omega^{\bullet} \otimes_B \F)$ by construction, $L_{\w}$ uniquely extends to an element of $\mathbb{B}(L^2(\Omega^{\bullet} \otimes_B \F))$.
\end{proof}

\begin{prop} \label{prop:star.map}
A $*$-algebra representation is given by 
\begin{align*} 
\lambda: B \to \mathbb{B}(L^2(\Omega^{\bullet}  \otimes_B \F)), & & b \mapsto \lambda(b) := L_b.
\end{align*} 
Moreover, if $A$ has no non-trivial zero divisors, or if $\FF = B$, then $\lambda$ is a faithful representation.
\end{prop}
\begin{proof}
It is clear that $\lambda$ is an algebra map. To see that $\lambda$ is a $*$-map, take $b \in B$, $\omega,\nu \in \Omega^{\bullet}$, and $f,k \in \F$, and note that 
\begin{align*}
g_{\F}(b \omega \otimes f, \nu \otimes \k)_{\sigma} 
= \, &  h'_{\F}(f, g_{\sigma}(b\omega,\nu)k)\\ 
= \, &  h'_{\F}(f, g_{\sigma}(\omega,b^*\nu)k)\\ 
= \, & g_{\F}(\omega \otimes f, b^*\nu \otimes \k)_{\sigma}\\ 
= \, & g_{\F}(\omega \otimes f, \lambda(b^*)\nu \otimes \k)_{\sigma}.
\end{align*}
Thus the adjoint map $\lambda^{\dagger}(b)$ coincides with $\lambda(b^*)$ on the dense subalgebra $\Omega^{\bullet} \otimes_B \F$, meaning that $\lambda^{\dagger}(b) = \lambda(b^*)$. Thus we see that $\lambda$ is a $*$-map as claimed.

Let us now assume that $A$ has no non-trivial zero divisors. Then it holds that $\FF \simeq A \square_B \Phi(\FF)$ is a torsion-free module, and hence that $\lambda$ is faithful. If instead we assume that $\FF = B$, then we see that since $1$ is an element of $\Omega^{\bullet}  \otimes_B \F\simeq \Omega^{\bullet}$, the representation must also be faithful.
\end{proof}

\subsection{Bounded Commutators} \label{subsection:BoundCommutators}

We now consider a second consequence of Proposition \ref{prop:formmult}, namely boundedness of the various commutator operators associated to a CQH-Hermitian space and an Hermitian holomorphic module. This is a direct noncommutative generalisation of an important classical phenomenon \cite[\textsection 2.4.1]{RennieSpecTrip}, one which is generalised by the definition of $K$-homology, and ultimately spectral triples, as recalled in Appendix \ref{app:spectraltriple}.

\begin{lem}\label{lem:bcomm}
For any $b \in B$, and $\omega \otimes f \in \Omega^\bullet \otimes_B \F$, it holds that 
\begin{enumerate}
    \item $[\nabla, \lambda(b)](\omega \otimes f) = \exd b \wedge \omega \otimes f$,
    \item $[\del_{\F}, \lambda(b)](\omega \otimes f) =\del b \wedge \omega \otimes f$,
    \item $[\adel_{\F}, \lambda(b)](\omega \otimes f) = \sigma(\adel b \otimes f) \wedge \omega$.
\end{enumerate}
\end{lem}
\begin{proof}
The first identity follows from the calculation 
\begin{align*}
[\nabla,\lambda(b)](\omega \otimes f) 
 = & \, \nabla(b\omega \otimes f) -  b \nabla(\omega \otimes f) \\
 = & \, \sigma(\exd b \otimes f) \wedge \omega + b\nabla(f) \wedge \omega - bf \otimes \exd \omega
   - b \nabla(f) \wedge \omega - bf \otimes \exd \omega\\
 = & \, \sigma(\exd b \otimes f) \wedge \omega.
\end{align*}
The other two identities are established analogously.
\end{proof}

\begin{prop}\label{cor:bcomm}
The following operators are all bounded on $\Omega^\bullet \otimes_B \F$, and hence uniquely extend to bounded operators on $L^2(\Omega^{\bullet} \otimes_B \F)$: For any $b \in B$, 
\begin{enumerate}
\item  $[\nabla, \lambda(b)]$, \, $[\del_{\F}, \lambda(b)]$, \, $[\adel_{\F}, \lambda(b)]$,
\item $[\nabla^{\dagger}, \lambda(b)]$, \, $[\del^{\dagger}_{\F}, \lambda(b)]$, \, $[\adel^{\dagger}_{\F}, \lambda(b)]$,
\item $[\Delta_{\nabla}, \lambda(b)]$, \, $[\Delta_{\del_{\F}}, \lambda(b)]$, \, $[\Delta_{\adel_{\F}}, \lambda(b)]$.
\end{enumerate}
\end{prop}
\begin{proof}
That the first triple of operators are all bounded follows from Lemma \ref{lem:bcomm} above, together with Proposition \ref{prop:formmult}. For the second triple, we note that $[\nabla^{\dagger},\lambda(b)]$ coincides on $\Omega^{\bullet} \otimes_B \F$ with the adjoint of $-[\nabla,\lambda(b)]$, and so, it is a bounded operator. Boundedness of  $[\partial^{\dagger}_{\F},\lambda(b)]$ and $[\bar{\partial}^\dagger_{\F},\lambda(b)]$ is established similarly. Coming now to the third triple, we see that each operator is a sum of bounded operators, and so, is itself a bounded operator.  Finally, we see that since $\Omega^{\bullet} \otimes_B \F$ is by construction dense in $L^2(\Omega^{\bullet} \otimes_B \F)$, each bounded operator on $\Omega^{\bullet} \otimes_B \F$ admits a unique bounded extension to the whole Hilbert space.
\end{proof}

\subsection{A Remark on Norms}

In this brief subsection, we observe that the norm  induced on $B$ by the embedding $\lambda: \Omega^{\bullet} \to \mathbb{B}(L^2(\Omega^{\bullet} \otimes_B \F))$ is less than or equal to the restriction to $B$ of the reduced norm $\|\cdot\|_{\mathrm{red}}$ of $\mathcal{A}_{\mathrm{red}}$, as defined in Appendix \ref{app:CQGs}.

\begin{prop} 
It holds that 
\begin{align*}
\|b\|_{\mathrm{red}} \geq \|\lambda(b)\|_{\mathrm{op}}  \geq \|b\|_{L^2}, & &  \textrm{for all $b \in B$},
\end{align*}
where $\|\cdot\|_{L^2}$ denotes the norm associated to the inner product of $\Omega^{\bullet} \otimes_B \F$, and $\|\cdot\|_{\mathrm{op}}$ denotes the operator norm of $\mathbb{B}(L^2(\Omega^{\bullet} \otimes_B \F))$.  Thus the restriction of $\lambda$ to $B$ extends to a $*$-algebra homomorphism $\mathcal{B}_{\mathrm{red}} \to \mathbb{B}(L^2(\Omega^{\bullet}) \otimes_B \F)$, where $\mathcal{B}_{\mathrm{red}}$ denotes the closure of $B$ in $\mathcal{A}_{\mathrm{red}}$.
\end{prop}
\begin{proof}
For the first inequality take any $\nu \in \Omega^{\bullet}$, and by abuse of notation denote by $\|\cdot\|_{L^2}$ the norm on $A \, \square_{\pi_B} \Phi(\Omega^{\bullet} \otimes_B \F)$ associated to the inner product $\haar \circ g_{\F,\unit}$. The inequality then follows from the following calculation
\begin{align*}
\|\lambda(b)(\nu \otimes f)\|_{L^2} 
=  \, & \|\unit(b\nu \otimes f)\|_{L^2} \\
=  \, & \|b\nu_{(-1)}f_{(-1)} \otimes [\nu_{(0)} \otimes f_{(0)}]\|_{L^2} \\
=  \, & \|(\lambda_A(b) \otimes \id)(\nu_{(-1)}f_{(-1)} \otimes [\nu_{(0)} \otimes f_{(0)}])\|_{L^2}\\
\leq \, &  \|b\|_{\mathrm{red}} \|\nu_{(-1)}f_{(-1)} \otimes [\nu_{(0)} \otimes f_{(0)}]\|_{L^2} \\
= \, &  \|b\|_{\mathrm{red}} \|\nu \otimes f\|_{L^2}.
\end{align*}
The implied extension of $\lambda$ to a map on $\mathcal{B}_{\mathrm{red}}$ now follows immediately.

For the second inequality, let $f \in \F$ be an element of norm $1$, then 
$$
\|\lambda(b)\|_{\mathrm{op}} \geq \|\lambda(b)(1 \otimes f)\|_{L^2} = \|b \otimes f\|_{L^2}  = \|b\|_{L^2}\|f\|_{L^2} = \|b\|_{L^2},
$$
giving the inequality.
\end{proof}


\section{Closed Operators and Operator Domains}

In this section we turn our attention to unbounded operators constructable from the exterior derivatives and holomorphic structures of a twisted CQH-Hermitian space. In particular, we address questions of closability, essential self-adjointness, and operator domains.

\subsection{Peter--Weyl Maps} \label{subsection:PeterWeyl}

By cosemisimplicity of $A$, the abelian category $\mathrm{mod}^{\pi_B}$ is  semisimple, and so ${}^A_B\textrm{mod}_0$ is semisimple. For any $\mathcal{F} \in  {}^A_B\textrm{mod}_0$, we have the decomposition 
\begin{align*}
\mathcal{F}  \cong  A \, \square_{\pi_B} \Phi(\mathcal{F})   \cong \Big(\bigoplus_{V\in \widehat{A}}\mathcal{C}(V)\Big) \, \square_{\pi_B} \Phi(\mathcal{F})  =  \bigoplus_{V\in \widehat{A}} \mathcal{C}(V) \, \square_{\pi_B}  \Phi(\mathcal{F})  =: \bigoplus_{V\in \widehat{A}} \mathcal{F}_V,
\end{align*}
where $\widehat{A}$ denotes the equivalence classes of irreducible $A$-comodules. We call this the \emph{Peter--Weyl decomposition} of $\mathcal{F}$.

For any $V \in {}^A\mathrm{mod}$, the coalgebra $\mathcal{C}(V)$ is isomorphic as a left $A$-comodule to a direct sum of copies of $V$, see \cite[Proposition 11.8]{KSLeabh} for details. Thus, for any left $A$-comodule map $f:\mathcal{F} \to \mathcal{F}$ it holds that 
\begin{align} \label{eqn:PeterWeylMap}
f(\mathcal{F}_{V}) \sseq \mathcal{F}_{V}, & & \textrm{ for all } V \in \widehat{A}.
\end{align} 
More generally, a \emph{Peter--Weyl map}  $f:\mathcal{F} \to \mathcal{F}$ is a $\mathbb{C}$-linear map satisfying (\ref{eqn:PeterWeylMap}). We now present some properties of the Peter--Weyl decomposition and Peter--Weyl maps in the  CQH-Hermitian setting. The proof is completely analogous to the arguments of  \cite[\textsection 5.2]{MMF3}, and so we omit it. 

\begin{prop} \label{prop:PWProp}
For a CQH-Hermitian space $\mathbf{H} = \{B \subseteq A, \Omega^{\bullet},\Omega^{(\bullet,\bullet)},\sigma\}$, and an Hermitian module $\F$, the Peter--Weyl decomposition of $\Omega^\bullet \otimes_B \F $ is orthogonal with respect to $\langle -, - \rangle_{\F}$. Moreover, for any  Peter--Weyl map $f:\Omega^{\bullet} \otimes_B \F  \to \Omega^{\bullet} \otimes_B \F $, it holds that 
\begin{enumerate}
\item $f$ is adjointable on $\Omega^{\bullet} \otimes_B \F $ with respect to $\langle -,- \rangle_{\F}$, and its adjoint is  a Peter--Weyl map,
\item if $f$ is symmetric with respect to $\langle -,- \rangle_{\F}$, then it is diagonalisable on $\Omega^{\bullet} \otimes_B \F $.
\end{enumerate}
\end{prop}

\subsection{Closability and Essential Self-Adjointness} \label{subsection:CESA}

In this subsection we examine closability and essential self-adjointness for unbounded operators on $\Omega^\bullet \otimes_B \F$. In particular, we show that the unbounded operators $\nabla, \del_{\F}$ and $\adel_{\F}$ are closable, and that the Dirac and Laplacian operators are essentially self-adjoint.

\begin{prop} \label{prop:closability} Every Peter--Weyl map $f: \Omega^\bullet \otimes_B \F \to \Omega^\bullet \otimes_B \F$ 
is closable.
\end{prop}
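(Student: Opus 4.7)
The plan is to exploit Proposition \ref{prop:PWProp} (part (1)), which tells us that every Peter--Weyl map $f \colon \Omega^\bullet \to \Omega^\bullet$ is adjointable with respect to $\langle \cdot,\cdot\rangle_\sigma$ at the algebraic level, and combine this with the standard criterion that an unbounded densely-defined operator on a Hilbert space is closable if and only if its Hilbert-space adjoint is densely defined.

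Concretely, I would proceed as follows. First recall that $\Omega^\bullet$ is dense in $L^2(\Omega^\bullet)$ by construction, so $f$, viewed as an unbounded operator with domain $\Omega^\bullet$, is densely defined. By Proposition \ref{prop:PWProp} there exists a $\mathbb{C}$-linear Peter--Weyl map $f^\dagger \colon \Omega^\bullet \to \Omega^\bullet$ satisfying
\[
\langle f(\omega),\nu\rangle_\sigma = \langle \omega, f^\dagger(\nu)\rangle_\sigma, \qquad \text{for all } \omega,\nu \in \Omega^\bullet.
\]
I would then read this identity as the statement that every $\nu \in \Omega^\bullet$ belongs to the domain of the Hilbert-space adjoint $f^*$. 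Indeed, for any fixed $\nu \in \Omega^\bullet$, the Cauchy--Schwarz inequality gives
\[
|\langle f(\omega),\nu\rangle_\sigma| = |\langle \omega, f^\dagger(\nu)\rangle_\sigma| \leq \|f^\dagger(\nu)\|_\sigma \, \|\omega\|_\sigma, \qquad \text{for all } \omega \in \Omega^\bullet,
\]
which is exactly the boundedness criterion defining $\mathrm{dom}(f^*)$; moreover $f^*(\nu) = f^\dagger(\nu)$ on $\Omega^\bullet$.

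Since $\Omega^\bullet$ is dense in $L^2(\Omega^\bullet)$, it follows that $\mathrm{dom}(f^*)$ is dense, and by the standard characterisation (see the appendix on unbounded operators) a densely-defined operator is closable precisely when its adjoint is densely defined. Hence $f$ is closable, which is what we wanted to prove. There is no real obstacle here: the substantive content has already been packaged into Proposition \ref{prop:PWProp}, and the remainder is a direct application of the general Hilbert-space closability criterion.
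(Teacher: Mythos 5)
Your argument is correct and is essentially the paper's own proof: algebraic adjointability of $f$ on $\Omega^\bullet$ from Proposition \ref{prop:PWProp}, a Cauchy--Schwarz estimate showing $\Omega^\bullet \sseq \dom(f^*)$, and the criterion from the appendix that a densely defined operator with densely defined adjoint is closable. The only (cosmetic) difference is that the paper additionally bounds $\|f^\dagger(\omega)\|$ by $\|f^\dagger|_V\|\,\|\omega\|$ using finite-dimensionality of each Peter--Weyl block, a step your version shows is unnecessary since the vector against which one tests is fixed.
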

\begin{proof}
Since $f$ is a Peter--Weyl map, it follows from Proposition \ref{prop:PWProp} that it is adjointable on $\Omega^{\bullet} \otimes_B \F $. Now for any $\alpha, \beta \in \Omega^\bullet \otimes_B \F$, consider the linear functional 
\begin{align*}
\Omega^\bullet \otimes_B \F  =  \mathrm{dom}(f) \to \mathbb{C}, & & \beta \mapsto \langle \alpha, f(\beta) \rangle_{\F}.
\end{align*}
Boundedness of the functional follows from the inequality
\begin{align*}
\left|\langle \alpha, f(\beta) \rangle_{\F}\right| = \left|\langle f^\dagger(\alpha), \beta \rangle_{\F}\right|  \leq  \|f^\dagger(\alpha)\|_{L^2}\|\beta\|_{L^2}.
\end{align*}
Hence $\alpha \in \mathrm{dom}(f^\dagger)$, implying that $ \Omega^\bullet \otimes_B \F \sseq \mathrm{dom}(f^\dagger)$, and consequently  that $\mathrm{dom}(f^\dagger)$ is dense in the Hilbert space $L^2(\Omega^\bullet \otimes_B \F)$. It now follows from  Appendix \ref{appendix:Unbounded} that $f$ is closable.
\end{proof}

Since every comodule map is automatically a Peter--Weyl map, we have the following immediate consequences of the proposition.

\begin{cor} 
Every left $A$-comodule map $f: \Omega^\bullet \otimes_B \F \to \Omega^{\bullet} \otimes_B \F$ is closable.
\end{cor}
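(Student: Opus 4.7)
The proof is essentially immediate from the preceding material, so the plan is simply to spell out the one-line reduction. The key observation, already recorded in \textsection\ref{subsection:PeterWeyl} at equation (\ref{eqn:PeterWeylMap}), is that any left $A$-comodule map $f \colon \Omega^\bullet \to \Omega^\bullet$ automatically satisfies $f(\Omega^\bullet_V) \subseteq \Omega^\bullet_V$ for every $V \in \widehat{A}$; this is because each isotypic component $\Omega^\bullet_V = \mathcal{C}(V)\,\square_H\,\Phi(\Omega^\bullet)$ is the maximal subcomodule of $\Omega^\bullet$ whose simple constituents are isomorphic to $V$, and hence must be carried into itself by any morphism of $A$-comodules.

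In other words, being a left $A$-comodule map is strictly stronger than being a Peter--Weyl map in the sense of \textsection\ref{subsection:PeterWeyl}. The corollary then follows at once by invoking Proposition \ref{prop:closability}, which asserts closability for every Peter--Weyl map on $\Omega^\bullet$. No further argument is required: no analytic estimate, no choice of core, and no passage through Takeuchi's equivalence beyond what has already been used to set up the Peter--Weyl decomposition.

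Since the statement is genuinely a corollary rather than an independent result, there is no real obstacle; the only thing to be careful about is to cite the correct ingredients in the correct order, namely (\ref{eqn:PeterWeylMap}) to identify left $A$-comodule maps as Peter--Weyl maps, and Proposition \ref{prop:closability} for closability. One could alternatively phrase the proof by reproducing the adjointability argument directly: for any comodule map $f$, its adjoint $f^\dagger$ (which exists by Proposition \ref{prop:PWProp}) has dense domain containing $\Omega^\bullet$, because each $\Omega^\bullet_V$ is finite-dimensional so $f^\dagger|_{\Omega^\bullet_V}$ is bounded; hence $f$ is closable by the standard criterion recalled in Appendix \ref{appendix:Unbounded}. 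But this is just a restatement of the proof of Proposition \ref{prop:closability}, and citing that proposition directly is cleaner.
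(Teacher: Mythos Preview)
Your proof is correct and follows exactly the same approach as the paper: the paper simply notes that every comodule map is automatically a Peter--Weyl map (by (\ref{eqn:PeterWeylMap})) and then invokes Proposition \ref{prop:closability}. Your additional remarks about the alternative direct adjointability argument are accurate but, as you say, redundant given the proposition.
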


\begin{cor} \label{cor:dclosed}
The operators $\nabla, \del_{\F}$, and $\adel_{\F}$ are closable.
\end{cor}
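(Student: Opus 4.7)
The plan is to deduce this corollary as a direct application of the previous corollary, which itself follows from Proposition \ref{prop:closability}. Recall that in a CQH-Hermitian space the differential calculus $\Omega^\bullet$ is left $A$-covariant, meaning by definition that the coaction $\Delta_L:B \to A \otimes B$ extends to an $A$-comodule algebra structure $\Delta_L:\Omega^\bullet \to A \otimes \Omega^\bullet$ with respect to which $\exd$ is a left $A$-comodule map. Furthermore, the constituent complex structure $\Omega^{(\bullet,\bullet)}$ is by assumption covariant, which means that $\Omega^{(a,b)}$ is a left $A$-subcomodule of $\Omega^\bullet$ for every $(a,b) \in \mathbb{N}_0^2$. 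As noted immediately after the definition of a covariant complex structure in \textsection \ref{subsection:covariantHS}, this covariance implies that the projections onto the $(a,b)$-summands are comodule maps, and hence so are the composites
\[
\del|_{\Omega^{(a,b)}} = \proj_{\Omega^{(a+1,b)}} \circ \exd, \qquad \adel|_{\Omega^{(a,b)}} = \proj_{\Omega^{(a,b+1)}} \circ \exd.
\]

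With this in hand, each of $\exd$, $\del$, and $\adel$ is a left $A$-comodule endomorphism of $\Omega^\bullet$. By (\ref{eqn:PeterWeylMap}) every such comodule map is automatically a Peter--Weyl map, so the previous corollary applies. Thus $\exd$, $\del$, and $\adel$ are closable as unbounded operators on the dense domain $\Omega^\bullet \subset L^2(\Omega^\bullet)$.

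There is no real obstacle here, as the work has already been done in Proposition \ref{prop:closability}: the bounded restriction of the adjoint to each finite-dimensional isotypic summand $\Omega^\bullet_V$ provides enough test functionals to show that $\Omega^\bullet \subseteq \dom(f^\dagger)$, which gives density of $\dom(f^\dagger)$ and hence closability by the standard criterion recalled in Appendix \ref{app:subsection:adjoint}. The only point worth verifying carefully is the comodule compatibility of $\del$ and $\adel$, which as above is an immediate consequence of the definition of a covariant complex structure.
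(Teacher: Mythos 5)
Your argument is correct and is essentially the paper's own proof: covariance of the calculus and of the complex structure makes $\exd$, $\del$, and $\adel$ left $A$-comodule maps, hence Peter--Weyl maps, so closability follows from Proposition \ref{prop:closability} via the preceding corollary. Your extra verification that $\del$ and $\adel$ are comodule maps simply spells out the remark already made in \textsection \ref{subsection:covariantHS}.
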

\begin{proof}
Since the calculus and complex structure are, by assumption, covariant, the maps $\nabla, \del_{\F}$, and $\adel_{\F}$ are comodule maps, and hence closable.
\end{proof}

We now prove essential self-adjointness for symmetric comodule maps, and then conclude essential self-adjointness for the twisted Dirac and Laplacian operators of a CQH-Hermitian space.

\begin{prop} \label{prop:CESA}
Every symmetric left $A$-comodule map $f: \Omega^\bullet \otimes_B \F \to \Omega^\bullet \otimes_B \F $ is  diagonalisable on $L^2(\Omega^\bullet \otimes_B \F )$, and moreover, is essentially self-adjoint.
\end{prop}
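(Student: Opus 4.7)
The plan is to combine the Peter--Weyl decomposition of $\Omega^\bullet$ with a standard deficiency-indices criterion for essential self-adjointness. First, since $f$ is a left $A$-comodule map, it satisfies $f(\Om^\bullet_V) \sseq \Om^\bullet_V$ for every $V \in \widehat{A}$ by \eqref{eqn:PeterWeylMap}, so $f$ is in particular a Peter--Weyl map in the sense of \textsection\ref{subsection:PeterWeyl}. Because $f$ is symmetric on $\Om^\bullet$, Proposition~\ref{prop:PWProp}(2) applies and yields diagonalisability of $f$ on $\Om^\bullet$. More concretely, the restriction $f|_{\Om^\bullet_V}$ is a self-adjoint endomorphism of the finite-dimensional Hilbert space $(\Om^\bullet_V,\la\cdot,\cdot\ra_\sigma)$, so the finite-dimensional spectral theorem produces an orthonormal basis of each $\Om^\bullet_V$ consisting of eigenvectors of $f|_{\Om^\bullet_V}$ with real eigenvalues.

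Collecting these bases over all $V \in \widehat{A}$, and invoking orthogonality of the Peter--Weyl decomposition (Proposition~\ref{prop:PWProp}), one obtains a countable orthonormal set $\{e_i\}_{i \in I} \subseteq \Om^\bullet$ of eigenvectors of $f$ with real eigenvalues $\{\lambda_i\}_{i \in I} \subseteq \bR$ whose linear span is $\Om^\bullet$ and hence is dense in $L^2(\Om^\bullet)$. This establishes diagonalisability on $L^2(\Om^\bullet)$.

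For essential self-adjointness, the plan is to verify the standard deficiency criterion $\ker(f^* \mp \mathbf{i}\,\id) = 0$, where $f^*$ denotes the Hilbert space adjoint of $f$ regarded as an unbounded operator with dense domain $\Om^\bullet \sseq L^2(\Om^\bullet)$. Closability of $f$ is already granted by Corollary to Proposition~\ref{prop:closability}, so $f^*$ is well defined. Suppose $v \in \dom(f^*)$ with $f^*(v) = \pm\mathbf{i}\,v$. Pairing against any $e_i \in \Om^\bullet \subseteq \dom(f)$, one computes
\begin{align*}
\pm \mathbf{i}\, \la v, e_i \ra_\sigma \;=\; \la f^*(v), e_i \ra_\sigma \;=\; \la v, f(e_i) \ra_\sigma \;=\; \lambda_i\, \la v, e_i \ra_\sigma,
\end{align*}
so $(\lambda_i \mp \mathbf{i})\la v, e_i \ra_\sigma = 0$. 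Since $\lambda_i \in \bR$, we conclude $\la v, e_i \ra_\sigma = 0$ for every $i$, and completeness of $\{e_i\}$ forces $v = 0$. Hence both deficiency subspaces vanish and $f$ is essentially self-adjoint; its self-adjoint closure is precisely the operator diagonal in the basis $\{e_i\}$ with spectrum $\{\lambda_i\}$.

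There is no real obstruction in this argument: the only nontrivial input is the Peter--Weyl diagonalisability supplied by Proposition~\ref{prop:PWProp}, and once an eigenbasis is in hand the deficiency computation is textbook. The expected application to the Dirac and Laplace operators $D_{\exd}, D_{\del}, D_{\adel}, \DEL_{\exd}, \DEL_{\del}, \DEL_{\adel}$ will then follow immediately, since covariance of the calculus and the complex structure, together with the formulas \eqref{eqn:codifferential}, ensure that each of these is a symmetric left $A$-comodule endomorphism of $\Om^\bullet$.
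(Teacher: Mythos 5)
Your argument is correct and follows essentially the same route as the paper: diagonalisability comes from Proposition \ref{prop:PWProp} applied to the symmetric Peter--Weyl map $f$, and essential self-adjointness then follows from the real eigenvalues via the standard criterion on the operators $f \pm \mathbf{i}\,\id$. The only cosmetic difference is that you verify the vanishing of the deficiency subspaces $\ker(f^{\dagger} \mp \mathbf{i}\,\id)$, whereas the paper shows directly that $f \pm \mathbf{i}\,\id$ map onto $\Om^\bullet$ and hence have dense range (the criterion recalled in Appendix \ref{app:essentialSAO}); these are equivalent since the deficiency space is the orthogonal complement of the corresponding range.
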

\begin{proof}
Diagonalisability of $f$ as  an operator on $L^2(\Omega^\bullet \otimes_B \F )$ follows immediately from  Proposition~\ref{prop:PWProp} and our assumption that $f$ is symmetric. That $f$ is symmetric also implies that its eigenvalues  are real. Thus the range of the  operators $f - \mathbf{i } \,\id$ and $f + \mathbf{i } \, \id$ must be equal to $\F \otimes_B  \Om^\bullet$, which is to say, the range of each operator is dense in $L^2(\Omega^\bullet \otimes_B \F)$. It now follows from the results  of Appendix \ref{appendix:Unbounded} that $f$ is essentially self-adjoint.
\end{proof}

\begin{cor} \label{cor:CESA}
The Dirac operators  $D_{\del_{\F}}, D_{\adel_{\F}}$, and $D_{\nabla}$, and the Laplace operators $\DEL_{\del_{\F}}$, $\DEL_{\adel_{\F}}$, and $\Delta_{\nabla}$, are diagonalisable and essentially self-adjoint.
\end{cor}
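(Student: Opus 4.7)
The plan is to reduce Corollary \ref{cor:CESA} to Proposition \ref{prop:CESA} by checking that each of the six operators is both a left $A$-comodule map and symmetric on $\Omega^{\bullet}$.

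First I would verify the comodule map property. By covariance of the calculus, the differential $\exd$ is a left $A$-comodule map, and by covariance of the complex structure the projections onto $\Omega^{(a+1,b)}$ and $\Omega^{(a,b+1)}$ are comodule maps, so $\del$ and $\adel$ are comodule maps. The Hodge map $\ast_{\sigma}$ is a comodule map (as noted in \textsection\ref{subsection:covariantHS}), and hence the codifferential formulae \eqref{eqn:codifferential} express $\exd^{\dagger}, \del^{\dagger}, \adel^{\dagger}$ as compositions of comodule maps, so they too are comodule maps. Therefore the Dirac operators $D_{\exd}, D_{\del}, D_{\adel}$, being sums of comodule maps, are themselves left $A$-comodule maps, and the Laplacians $\Delta_{\exd}, \Delta_{\del}, \Delta_{\adel}$, being their squares, are comodule maps as well.

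Next I would check symmetry. By Proposition \ref{prop:PWProp} each of $\exd, \del, \adel$ is adjointable on $\Omega^{\bullet}$ with respect to $\langle \cdot,\cdot\rangle_{\sigma}$, and the operators $\exd^{\dagger}, \del^{\dagger}, \adel^{\dagger}$ of \eqref{eqn:codifferential} coincide with these adjoints. Hence for any $\omega,\nu\in\Omega^{\bullet}$,
\[
\langle D_{\exd}\omega,\nu\rangle_{\sigma} = \langle(\exd+\exd^{\dagger})\omega,\nu\rangle_{\sigma} = \langle\omega,(\exd^{\dagger}+\exd)\nu\rangle_{\sigma} = \langle\omega,D_{\exd}\nu\rangle_{\sigma},
\]
so $D_{\exd}$ is symmetric on $\Omega^{\bullet}$, and the same argument works for $D_{\del}$ and $D_{\adel}$. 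The Laplacians $\Delta_{\exd} = D_{\exd}^{2}$, $\Delta_{\del} = D_{\del}^{2}$, $\Delta_{\adel} = D_{\adel}^{2}$ are then symmetric as squares of symmetric operators on the invariant dense domain $\Omega^{\bullet}$.

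Having established both properties, Proposition \ref{prop:CESA} applies directly to each of the six operators, yielding diagonalisability on $L^{2}(\Omega^{\bullet})$ and essential self-adjointness. No step here appears to be a serious obstacle; the only mild subtlety is making sure that the Laplacians are handled by the proposition, which is immediate once one observes that a square of a symmetric $A$-comodule map is itself a symmetric $A$-comodule map defined on the same dense invariant domain $\Omega^{\bullet}$.
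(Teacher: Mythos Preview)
Your argument is correct and is precisely the intended application of Proposition~\ref{prop:CESA}: the paper leaves the corollary unproved because each Dirac and Laplace operator is manifestly a symmetric left $A$-comodule map, and you have supplied exactly those verifications. There is nothing to add.
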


\begin{rem}
It is interesting to observe that the $*$-map of the calculus $\ast:\Omega^{\bullet} \to \Omega^{\bullet}$ extends to an $\mathbb{R}$-linear map $L^2(\Omega) \to L^2(\Omega)$ and that it restricts to an $\mathbb{R}$-linear isomorphism between the domains of the closures of $D_{\del}$ and $D_{\adel}$. Indeed, for any orthonormal $D_{\del}$-eigenbasis $\{\omega_k\}_k$ of $\Omega^{\bullet}$, an orthonormal  $D_{\adel}$-eigenbasis is given by $\{\omega^*_k\}_k$, where if $\lambda_k$ is the $D_{\del}$-eigenvalue of $\omega_k$, then $\overline{\lambda_k}$ is the $D_{\adel}$-eigenvalue of $\omega_k^*$. Now the domain of the closure of $D_{\del}$ consists of elements $\sum_i c_i \omega_i$ such that $\sum_i |c_i|^2 < \infty$ and  $\sum_i |c_i|^2|\lambda_i|^2 < \infty$, with an analogous description of the domain of the closure of $D_{\adel}$. Thus it is clear that the $*$-map interchanges the two domains.
\end{rem}

\subsection{Cores and Domains}

As recalled in Appendix \ref{app:spectraltriple}, one of the defining requirements of a spectral triple $(A,\H,D)$  is that the domain of the unbounded operator $D$ is closed under the action of $\lambda(a)$, for all $a \in A$. This subtle condition can be verified using cores.  Recall that a \emph{core} for a closable operator $T:\dom(T) \to \H$ is a subset $X \sseq \dom(T)$ such that the closure of $T$ is equal to the closure of the restriction of $T$ to $X$, which is to say,
$
\left(T|_X\right)^c = T^c. 
$
Let $\mathcal{H}$ be a separable Hilbert space,  $D: \mathrm{dom}(D) \sseq \H \to \H$  a densely-defined  closed  operator,  $X \sseq  \mathrm{dom}(D)$ a core for $D$, and $K \in  \mathbb{B}(\H)$ such that $K(X)$ is contained in $\dom(D)$,  and $[D,K]: X \to \H$  is  bounded on $X$. Then, as established by Forsyth, Mesland, and Rennie in \cite[Proposition 2.1]{RennieMesland}, we have that  $K(\dom(D)) \sseq \dom(D)$.

Applying this proposition directly to a general CQH-Hermitian space, we get the following result.

\begin{prop} \label{prop:domains}
Let $\mathbf{H} = (B \subseteq A,\Omega^\bullet,\Omega^{(\bullet,\bullet)},\sigma)$ be a CQH-Hermitian space, $\F$ an Hermitian holomorphic module, and denote by $D_{\adel_{\F}}$ the associated  twisted Dolbeault--Dirac operator. If $A$ is finitely generated as an algebra, then it holds that 
\begin{align*}
\lambda(b)\mathrm{dom}\Big(D_{\adel_\F}\Big) \sseq \mathrm{dom}\Big(D_{\adel_\F}\Big), & & \textrm{ for all } b \in B.
\end{align*}
\end{prop}
\begin{proof}
Since we are assuming that $A$ is finitely generated as an algebra, it follows from Proposition \ref{prop:separable} that $L^2(\Omega^{\bullet} \otimes_B \F)$ is separable. The subspace $\Omega^{\bullet} \otimes_B \F \sseq \mathrm{dom}(D_{\adel_\F})$ is a core  by construction of the 
closure of $D_{\adel_{\F}}$. The core is clearly closed under the action of $\lambda(b)$, for all $b \in B$. Proposition \ref{cor:bcomm} says that $[D_{\adel_{\F}},\lambda(b)]$ is a bounded operator on $\Omega^{\bullet} \otimes_B \F$, for all $b \in B$, and so, we see that $\lambda(b)\mathrm{dom}(D_{\adel_{\F}})$ is contained in $\dom(D_{\adel_{\F}})$ as claimed. 
\end{proof}


\section{Twisted Dolbeault--Dirac Fredholm Operators} \label{section:TFFFred}

In this section we address the Fredholm property for twisted Dolbeault--Dirac operators. More precisely, we show that twisting the Dolbeault--Dirac operator of a CQH-K\"ahler space by a negative Hermitian holomorphic module produces a Fredholm operator if and only if the top anti-holomorphic cohomology group is finite-dimensional. In this case, we also observe that the index of the Fredholm operator is expressible in terms of the dimension of the cohomology group. 

\subsection{The Holomorphic Euler Characteristic}

In this subsection we consider the natural noncommutative generalisation of the  anti-holomorphic Euler characteristic of a classical complex manifold. 

\begin{defn}
Consider a dc  $\Om^\bullet$,  a complex structure $\Om^{(\bullet,\bullet)}$, and an Hermitian holomorphic module $(\F,\adel_{\F})$ with finite-dimensional anti-holomorphic cohomologies. We define the \emph{holomorphic Euler characteristic} of $\F$ to be the value
\begin{align*}
 \chi_{\adel_{\F}}  := \sum_{k \in \mathbb{Z}_{\geq 0}}  (-1)^k \dim\!\left(H^{(0,k)}_{\adel_{\F}}\right) \in \mathbb{Z},
\end{align*}
where we have denoted by $H_{\adel_{\F}}^{(0,k)}$ the $k$-cohomology group of the complex of twisted anti-holomorphic forms  
$$
\adel_{\F}:\Omega^{(0,\bullet)} \otimes_B \F \to \Omega^{(0,\bullet)} \otimes_B \F.
$$
\end{defn}
Note that there exist holomorphic modules with infinite-dimensional holomorphic Euler characteristics, and for these examples the Euler characteristic is not defined.

\subsection{Fredholm Operators}

We begin by recalling the definition of an (unbounded) Fredholm operator, which generalises the index theoretic properties of elliptic differential operators over a compact manifold.

\begin{defn}
For $\mathcal{H}_1$ and $\mathcal{H}_2$ two Hilbert spaces, and $T: \mathrm{dom}(T) \sseq \mathcal{H}_1 \to \mathcal{H}_2$ a densely defined closed linear operator, we say that $T$  is a {\em  Fredholm operator} if $\mathrm{ker}(T)$ and $\mathrm{coker}(T)$ are both finite-dimensional. The \emph{index} of a Fredholm operator $T$ is then defined to be the integer
\begin{align*}
\mathrm{index}(T) := \mathrm{dim}\left(\mathrm{ker}(T)\right) -  \mathrm{dim}\!\left(\mathrm{coker}(T)\right)\!.
\end{align*}
The image $\mathrm{im}(T)$ of a Fredholm operator $T$ is always closed \cite[\textsection 2]{SlechterFred}.
 \end{defn}

\subsection{The Dolbeault--Dirac Fredholm Index} \label{subsection:Fred}

Since $D_{\adel_{\F}}$ is a self-adjoint operator, if it were a Fredholm operator then its index  would necessarily be zero. However, we can alternatively calculate its index with respect to the canonical $\mathbb{Z}_2$-grading of the Hilbert space. For any CQH-Hermitian space, we introduce the spaces 
\begin{align*}
\Omega^{(0,\bullet)}_{\mathrm{even}} \otimes_B \F := \bigoplus_{k \in \mathbb{Z}_{\geq 0}} \Omega^{(0,2k)} \otimes_B \F, & & \Omega^{(0,\bullet)}_{\mathrm{odd}} \otimes_B \F := \bigoplus_{k \in \mathbb{Z}_{\geq 0}} \Omega^{(0,2k+1)} \otimes_B \F,
\end{align*} 
and the associated Hilbert space completions $L^2\!\left(\Omega^{(0,\bullet)}_{\mathrm{even}} \otimes_B \F\right)$ and $L^2\!\left(\Omega^{(0,\bullet)}_{\mathrm{odd}} \otimes_B  \F\right)$.
Define the restricted operator
\begin{align*} 
D_{\adel_{\F}}^+:\dom(D_{\adel_{\F}}) \cap L^2\!\left(\Omega^{(0,\bullet)}_{\mathrm{even}} \otimes_B \F\right) \to L^2\Big(\Om^{(0,\bullet)}_{\mathrm{odd}} \otimes_B  \F\Big), & & x \mapsto D_{\adel_{\F}}(x).
\end{align*}

\begin{prop} \label{prop:index.Euler}
Let $\F$ be an Hermitian holomorphic module with finite-dimensional anti-holomorphic cohomology groups.  If $D_{\adel_{\F}}^+$ is a Fredholm operator,  then its index is equal to the anti-holomorphic Euler characteristic of $\Omega^{(\bullet,\bullet)} \otimes_B \F$, which is to say, 
\begin{align*}
\mathrm{index}\!\left(D_{\adel_{\F}}^+\right) =  \chi_{\adel_{\F}}.
\end{align*}
\end{prop}
\begin{proof}
Let $D_{\adel_{\F}}^+$ be a Fredholm operator, and consider its index
\begin{align*}
\mathrm{index}\!\left(D_{\adel_{\F}}^+\right)  
= & \, \dim\!\left(\ker\!\left(D_{\adel_{\F}}^+\right)\right)  - \dim\!\left(\mathrm{coker}\!\left(D_{\adel_{\F}}^+\right)\right).
\end{align*}
It follows from Hodge decomposition that 
\begin{align*}
\mathrm{index}\!\left(D_{\adel_{\F}}^+\right)   = & \, \sum_{k \in 2\mathbb{Z}_{\geq 0}}  \dim\!\Big(H^{(0,k)}_{\adel_{\F}}\Big) - \sum_{k  \in 2 \mathbb{Z}_{\geq 0} + 1}  \dim\!\Big(H^{(0,k)}_{\adel_{\F}}\Big)
=  \, \sum_{k \in \mathbb{Z}_{\geq 0}}  (-1)^k \textrm{dim}\Big(H^{(0,k)}_{\adel_{\F}}\Big).
\end{align*}
Thus we see that the index of $D_{\adel_{\F}}^+$ is equal to $\chi_{\adel_{\F}}$ as claimed.
\end{proof}

\subsection{Fredholm Operators from Twisting}

In this section we show that twisting the Dolbeault--Dirac operator of CQH-K\"ahler space by a negative Hermitian holomorphic module produces a Fredholm operator if its anti-holomorphic cohomology groups are finite-dimensional. Moreover, in this case the index of the twisted operator is given by the dimension of this cohomology group. The proof combines noncommutative  Hodge decomposition, the noncommutative Kodaira vanishing theorem, and the existence of spectral gaps for negative modules. This gives a perfect example of how the analytic behaviour of the Dolbeault--Dirac operators is shaped by the complex geometry of the underlying dc. This result will be used in \textsection \ref{section:HK} to construct Dolbeault--Dirac Fredholm operators for all the irreducible quantum flag manifolds.

\begin{thm} \label{thm:THETHM}
If $\mathcal{F}$ is a negative module over a $2n$-dimensional CQH-K\"ahler space, with finite-dimensional anti-holomorphic cohomology groups, then the twisted Dirac operator  
\begin{align*} 
D_{\adel_{\mathcal{F}}}^+:\dom(D_{\adel_{\mathcal{F}}}) \cap L^2\!\left(\Omega^{(0,\bullet)}_{\mathrm{even}} \otimes_B \F\right) \to L^2\Big(\Om^{(0,\bullet)}_{\mathrm{odd}} \otimes_B \F\Big)
\end{align*}
is a Fredholm operator.
\end{thm}
\begin{proof}
By the equivalence between cohomology classes and harmonic forms implied by Hodge decomposition, we have that
\begin{align*} 
\mathrm{dim}\!\left(\ker(D_{\adel_{\F}}^+)\right)  = \sum_{k \in 2\mathbb{Z}_{\geq 0}} \mathrm{dim}\!\left(H^{(0,k)}_{\adel_{\F}}\right) < \infty.
\end{align*}
Since $D_{\adel_{\F}}$ is diagonalisable on $\Omega^\bullet \otimes_B \F$, its closure cannot admit an additional non-trivial eigenvector with eigenvalue zero. So in particular, the operator $D^+_{\adel_{\F}}$ and its closure have the same finite-dimensional kernel. 

Let us now move on to the cokernel of the operator. By  \cite[Theorem 3.4]{SpectralGap} we know that the absolute value of the non-zero eigenvalues of $D_{\adel_{\F}}$ are bounded below by a non-zero constant.  Let us now identify 
\begin{align} \label{eqn:L2NonHarm}
L^2\!\left(\del_{\F}(\Omega^\bullet \otimes_B  \F) \oplus \adel^{\dagger}_{\F}(\Omega^\bullet \otimes_B  \F) \right)
\end{align}
with the $\ell^2$-sequences for some choice of basis $\{e_n\}_{n\in \mathbb{Z}_{\geq 0}}$ which diagonalises $D_{\adel_{\F}}$. Taking any such $\ell^2$-sequence $\sum_{n=0}^{\infty} a_n e_n$,  and denoting  $D_{\adel_{\F}}(e_n) =: \mu_n e_n$, we see that 
\begin{align*}
\left\|\sum_{n=0}^{\infty} \mu_n^{-1} a_n e_n \right \|_{L^2} \leq  \sup_{n\in \mathbb{Z}_{\geq 0}} |\mu_n|^{-1}  \left\|\sum_{n=0}^{\infty}  a_n e_n \right\|_{L^2} < \infty.
\end{align*}
Hence $\sum_{n=0}^{\infty} \mu_n^{-1} a_n e_n$ is a well-defined element of $L^2\big( \Omega^{\bullet} \otimes_B  \F \big)$. Moreover, since 
\begin{align*}
D_{\adel_{\F}}\!\!\left(\sum_{n=0}^{\infty} \mu_n^{-1} a_n e_n\right) = \sum_{n=0}^{\infty} a_n e_n,
\end{align*}
we now see that the image of $D_{\adel_{\F}}$ is equal to \eqref{eqn:L2NonHarm}. In particular, appealing again to Hodge decomposition, we see that 
\begin{align*}
\mathrm{dim}\!\left(\mathrm{coker}(D^+_{\adel_{\F}})\right) = \bigoplus_{k \in 2\mathbb{Z}_{\geq 0}+1} \mathrm{dim}\!\left(\mathcal{H}^{(0,k)}\right).
\end{align*}
Thus the cokernel of the operator is finite-dimensional if $\F$ has finite-dimensional odd degree anti-holomorphic cohomologies. Hence we have a Fredholm operator.
\end{proof}


\section{Heckenberger--Kolb Calculi} \label{section:HK}

In this section we present our motivating family of examples, the irreducible quantum flag manifolds endowed with their Heckenberger--Kolb calculi. Using Theorem \ref{thm:THETHM} we show that twisting the Dolbeault--Dirac operator of a Heckenberger--Kolb calculus by negative line modules produces a Fredholm operator. The proof relies on the Borel--Weil theorem for the irreducible quantum flag manifolds \cite{CDOBBW}, which in addition to establishing that the operator is Fredholm, allows us to give an explicit value for its operator index.

\subsection{Drinfeld--Jimbo Quantum Groups}

Let $\frak{g}$ be a finite-dimensional complex simple Lie algebra of rank $r$, and fix a Cartan subalgebra and  a set of simple roots $\Pi = \{\alpha_1, \dots, \alpha_r\}$.  For $q \in \bR_{>0}$ such that  $q \neq 1$, we denote by $U_q(\frak{g})$ the Drinfeld--Jimbo quantised enveloping algebra. We denote the generators by $E_i,F_i,K_i$, for $i = 1, \dots, r$ and follow the conventions of \cite[\textsection 7]{KSLeabh}.  Moreover, we endow $U_q(\frak{g})$ with the compact real form Hopf $*$-algebra structure.

We denote the  fundamental weights  of $\frak{g}$ by $\{\varpi_1, \dots, \varpi_r\}$, and by  $\mathcal{P}^+$ the cone of  {\em dominant integral weights}.  For each $\mu\in\mathcal{P}^+$, we denote by $V_{\mu}$ the corresponding finite-dimensional type-$1$, or admissable, $U_q(\frak{g})$ highest weight module  $V_\mu$. We recall that $V_{\mu}$ has the same dimension as its classical counterpart.
 
Associated to $V$, a finite-dimensional $U_q(\mathfrak{g})$-module, $v \in V$, and $f \in V^*$, the linear dual of $V$, we have the functional
\begin{align*}
c^{\textrm{\tiny $V$}}_{f,v}:U_q(\mathfrak{g}) \to \mathbb{C}, & & X \mapsto f\big(X(v)\big).
\end{align*}
Consider the Hopf subalgebra of $U_q(\mathfrak{g})^\circ$, the Hopf dual of $U_q(\mathfrak{g})$, generated by all functionals of the form $c^{\textrm{\tiny $V$}}_{f,v}$, for $V$ a type-$1$ representation. We denote this Hopf $*$-algebra by $\O_q(G)$ and call it the {\em Drinfeld--Jimbo quantum coordinate algebra of $G$}, where $G$ is the compact, simply-connected, simple Lie group having $\mathfrak{g}$ as its complexified Lie algebra. Note that by construction, $\O_q(G)$ is a CQGA. We denote by $\langle -,-\rangle$ the dual pairing between $U_q(\frak{g})$ by $\O_q(G)$.

\subsection{Quantum Flag Manifolds}

For $S \subset \Pi$ a non-empty subset of simple roots, consider the Hopf $*$-subalgebra
\begin{align*}
U_q(\mathfrak{l}_S) := \big< K_i, E_s, F_s \,|\, i = 1, \dots, r; \, s \in S \big> \subseteq U_q(\mathfrak{g}).
\end{align*} 
We call the \mbox{$*$-subalgebra} 
\begin{align*}
\O_q(G/L_S) := {}^{U_q(\mathfrak{l}_S)}\O_q(G) = \Big\{b \in \O_q(G) \,| \, b_{(1)}\langle X,b_{(1)}\rangle  = \e(X)b, \textrm{ for all } X \in U_q(\frak{l}_S)\Big\},
\end{align*}
the \emph{quantum flag manifold} associated to $S$. Note that $\O_q(G/L_S) $ is a left \mbox{$\O_q(G)$-comodule} algebra by construction. Moreover, $\O_q(G)$ is faithfully flat as a right $\O_q(G/L_S) $-module (see for example \cite[\textsection 5.4]{GAPP}). Thus it follows from \cite[Theorem 1]{Tak} that $\O_q(G/L_S) $ coincides with the space of right coinvariants of the coaction $\Delta_R := (\id \otimes \pi_S) \circ \Delta$, where 
$$
\pi_S: \O_q(G) \to \O_q(L_S) := \O_q(G)/\O_q(G/L_S)^+\O_q(G)
$$
is the canonical Hopf algebra projection. In particular, we note that $\O_q(G/L_S) $ is a CQGA-homogeneous space.

\subsection{Relative Line Modules over the Irreducible Quantum Flag Manifolds} \label{subsection:LBs}

In this subsection we discuss relative line modules over a special subfamily of quantum flag manifolds. If $S = \{\alpha_1, \ldots, \alpha_r\}\backslash \{\a_x\}$, where $\alpha_x$ has coefficient $1$ in the expansion of the longest root of $\frak{g}$, then we say that the associated quantum flag manifold is  {\em irreducible}.

\begin{center}
\begin{table}[ht]
    \caption{\small{Irreducible Quantum Flag Manifolds: organised by series, with defining  crossed node, CQGA-homogeneous space symbol, and name.}} \label{table:CQFMs}
{\small \renewcommand{\arraystretch}{2}%
 \begin{tabular}{|c|c|c|c|}

\hline

\small $A_n$&
\begin{tikzpicture}[scale=.5]
\draw
(0,0) circle [radius=.25] 
(8,0) circle [radius=.25] 
(2,0)  circle [radius=.25]  
(6,0) circle [radius=.25] ; 

\draw[fill=black]
(4,0) circle  [radius=.25] ;

\draw[thick,dotted]
(2.25,0) -- (3.75,0)
(4.25,0) -- (5.75,0);

\draw[thick]
(.25,0) -- (1.75,0)
(6.25,0) -- (7.75,0);
\end{tikzpicture} & \small $\O_q(\text{Gr}_{n+1,s})$ & \small quantum Grassmannian  \\

\small $B_n$ &
\begin{tikzpicture}[scale=.5]
\draw
(4,0) circle [radius=.25] 
(2,0) circle [radius=.25] 
(6,0)  circle [radius=.25]  
(8,0) circle [radius=.25] ; 
\draw[fill=black]
(0,0) circle [radius=.25];

\draw[thick]
(.25,0) -- (1.75,0);

\draw[thick,dotted]
(2.25,0) -- (3.75,0)
(4.25,0) -- (5.75,0);

\draw[thick] 
(6.25,-.06) --++ (1.5,0)
(6.25,+.06) --++ (1.5,0);                      

\draw[thick]
(7,0.15) --++ (-60:.2)
(7,-.15) --++ (60:.2);
\end{tikzpicture} & \small $\O_q(\mathbf{Q}_{2n+1})$ & \small {odd} quantum quadric  \\ 

\small $C_n$& 
\begin{tikzpicture}[scale=.5]
\draw
(0,0) circle [radius=.25] 
(2,0) circle [radius=.25] 
(4,0)  circle [radius=.25]  
(6,0) circle [radius=.25] ; 
\draw[fill=black]
(8,0) circle [radius=.25];

\draw[thick]
(.25,0) -- (1.75,0);

\draw[thick,dotted]
(2.25,0) -- (3.75,0)
(4.25,0) -- (5.75,0);

\draw[thick] 
(6.25,-.06) --++ (1.5,0)
(6.25,+.06) --++ (1.5,0);                      

\draw[thick]
(7,0) --++ (60:.2)
(7,0) --++ (-60:.2);
\end{tikzpicture} &\small   $\O_q(\mathbf{L}_{n})$ & \small 
quantum Lagrangian Grassmannian    \\ 

\small $D_n$& 
\begin{tikzpicture}[scale=.5]

\draw[fill=black]
(0,0) circle [radius=.25] ;

\draw
(2,0) circle [radius=.25] 
(4,0)  circle [radius=.25]  
(6,.5) circle [radius=.25] 
(6,-.5) circle [radius=.25];

\draw[thick]
(.25,0) -- (1.75,0)
(4.25,0.1) -- (5.75,.5)
(4.25,-0.1) -- (5.75,-.5);

\draw[thick,dotted]
(2.25,0) -- (3.75,0);
\end{tikzpicture} &\small   $\O_q(\mathbf{Q}_{2n})$ & \small even quantum quadric  \\

\small $D_n$ & 
\begin{tikzpicture}[scale=.5]
\draw
(0,0) circle [radius=.25] 
(2,0) circle [radius=.25] 
(4,0)  circle [radius=.25] ;

\draw[fill=black] 
(6,.5) circle [radius=.25];
\draw
(6,-.5) circle [radius=.25];

\draw[thick]
(.25,0) -- (1.75,0)
(4.25,0.1) -- (5.75,.5)
(4.25,-0.1) -- (5.75,-.5);

\draw[thick,dotted]
(2.25,0) -- (3.75,0);
\end{tikzpicture} &\small   $\O_q(\textbf{S}_{n})$ & \small quantum spinor variety   \\

\small $E_6$& \begin{tikzpicture}[scale=.5]
\draw
(2,0) circle [radius=.25] 
(4,0) circle [radius=.25] 
(4,1) circle [radius=.25]
(6,0)  circle [radius=.25] ;

\draw
(0,0) circle [radius=.25];
\draw[fill=black] 
(8,0) circle [radius=.25];

\draw[thick]
(.25,0) -- (1.75,0)
(2.25,0) -- (3.75,0)
(4.25,0) -- (5.75,0)
(6.25,0) -- (7.75,0)
(4,.25) -- (4, .75);
\end{tikzpicture}

 &\small  $\O_q(\mathbb{OP}^2)$ & \small  quantum Caley plane   \\
\small $E_7$& 
\begin{tikzpicture}[scale=.5]
\draw
(0,0) circle [radius=.25] 
(2,0) circle [radius=.25] 
(4,0) circle [radius=.25] 
(4,1) circle [radius=.25]
(6,0)  circle [radius=.25] 
(8,0) circle [radius=.25];

\draw[fill=black] 
(10,0) circle [radius=.25];

\draw[thick]
(.25,0) -- (1.75,0)
(2.25,0) -- (3.75,0)
(4.25,0) -- (5.75,0)
(6.25,0) -- (7.75,0)
(8.25, 0) -- (9.75,0)
(4,.25) -- (4, .75);
\end{tikzpicture} &\small   $\O_q(\textbf{F})$ 
& \small   quantum Freudenthal variety   \\
\hline
\end{tabular}
 }
\end{table}
\end{center}

 For the reader's convenience, we recall in the above table the standard pictorial description of the quantum Levi subalgebras defining the irreducible quantum flag manifolds, given in terms of Dynkin diagrams. 

In the irreducible case, the one-dimensional $U_q(\mathfrak{l}_S)$-modules correspond to the elements of $\mathcal{P}_{S^c}$, the $\mathbb{Z}$-module span of the set $\{\varpi_x \,|\, s \in S\}$. This in turn implies that the one-dimensional $\O_q(L_S)$-comodules correspond to the elements of  $\mathcal{P}_{S^c}$. Thus by Takeuchi's equivalence the relative line modules are indexed by the elements of $\mathcal{P}_{S^c}$. For the special case of the irreducible quantum flag manifolds, we see that 
$
\mathcal{P}_{S^c} = \mathbb{Z}\varpi_x.
$
In this case we denote by $\EE_{l}$ the relative line module corresponding to the weight $l\varpi_x$. 

We make some important observations about relative line modules over the irreducible quantum flag manifolds: Firstly, we note that for all $l \in \mathbb{Z}$, we have $(\EE_l)^\ast \cong \EE_{-l}$. Secondly, we note that for each $l \in \mathbb{Z}$, the sesquilinear pairing
\begin{align*}
h_{\EE_l}:\EE_l \times \EE_l \to \O_q(G/L_S) , & & (e_1,e_2) \mapsto e_1^*e_2,
\end{align*}
gives $\EE_l$ the structure of a covariant Hermitian relative line module. Since $\EE_l$ is a simple object in the abelian category 
$$
{}^{~~~~\O_q(G)}_{\O_q(G/L_S)}\textrm{mod}_0,
$$  
we see that $h_{\EE_l}$ is the unique such structure up to positive scalar multiple. Finally, we recall from \cite[Theorem 4.9]{DOKSS} that for $k > 0$, the line module $\EE_k$ is positive with respect to $\kappa$, and $\EE_{-k}$ is negative.

\subsection{Compact Quantum Homogeneous K\"ahler Spaces}

As established in the seminal papers \cite{HK, HKdR}, over any irreducible quantum flag manifold $\O_q(G/L_S) $, there exists a unique finite-dimensional left $\O_q(G)$-covariant $*$-dc
$$
\Omega^{\bullet}_q(G/L_S) \in \, {}^{~~~~\O_q(G)}_{\O_q(G/L_S)}\textrm{mod}_0
$$
of classical total dimension. 
Moreover, as is clear from the Hecknberger--Kolb construction, each  $\Omega^{\bullet}_q(G/L_S)$ comes endowed with an opposite pair of left $\O_q(G)$-covariant complex structures 
\begin{align*}
\Omega^{(\bullet,\bullet)}_q(G/L_S),  & & \textrm{ and  }   & & \overline{\Omega}^{(\bullet,\bullet)}_q(G/L_S),
\end{align*} 
and these are the unique such complex structures for the $*$-dc.  It follows from \cite[Theorem~5.10]{MarcoConj} and \cite[Proposition 5.5]{SpectralGap} that there exists an open interval $I$ around $1$, and a form $\kappa \in \Omega^{(1,1)}$, such that the pair
$$
(\Omega^{(\bullet,\bullet)}_q(G/L_S), \kappa)
$$
is a left $\O_q(G)$-covariant K\"ahler structure, for all $q \in I$. The associated metric $g_{\kappa}$ is positive definite and this uniquely identifies $\kappa$ up to strictly positive real scalar multiple.  Finally, we recall that the closure of the integral of the K\"ahler structure was established in  \cite{SpectralGap}. Collecting together all these results we now arrive at the following theorem.

\begin{thm} \label{thm:PosDefKappa}
For each irreducible quantum flag manifold $\mathcal{O}_q(G/L_S)$,   the quadruple
$$
 \left(\O_q(G/L_S) , \, \Omega^{\bullet}_q(G/L_S), \,\Omega^{(\bullet,\bullet)}_q(G/L_S), \, \kappa \right)
$$
is a CQH-K\"ahler space, for all $q \in I$.
\end{thm}

\subsection{Canonical Bundles}

For each irreducible quantum flag manifold, the space of top holomorphic forms $\Omega^{(M,0)}$ is a line module over $\OO_q(G/L_S)$. We will write $C_S \in \mathbb{Z}_{>0}$ for the integer specified by
\[
\Omega^{(M,0)} \;\cong\; \EE_{-C_S}.
\]
An explicit formula for $C_S$ was obtained in~\cite{DOKSS}, and, as expected, this matches the corresponding classical value. For the reader's convenience, we summarise this information in the table below, together with the associated dimension $M$.

\begin{center}
\begin{table}[ht]
\label{table.table2.qfm}
\caption{\small{Irreducible quantum flag manifolds: notation for the CQGA-homogeneous space, the Heckenberger--Kolb calculus complex dimension, and the identification of the top holomorphic forms with a line module.}} \label{table:CQFMsEk}
{\small \renewcommand{\arraystretch}{1.8}%
\begin{tabular}{|c|c|c|}
\hline
~~ $\O_q(G/L_S)$ ~~ & ~~ $M \coloneqq \dim\!\big(\Omega^{(1,0)}\big)$ ~~ & ~~ Canonical line module $\Omega^{(M,0)}$ ~~ \\
\hline
$\O_q(\mathrm{Gr}_{n+1,s})$ & $s(n\!+\!1\!-\!s)$ & $\EE_{-(n+1)}$ \\
$\O_q(\mathbf{Q}_{2n+1})$ & $2n-1$ & $\EE_{-2n+1}$ \\
$\O_q(\mathbf{L}_{n})$ & $\frac{n(n+1)}{2}$ & $\EE_{-(n+1)}$ \\
$\O_q(\mathbf{Q}_{2n})$ & $2(n-1)$ & $\EE_{-2(n-1)}$ \\
$\O_q(\mathbf{S}_{n})$ & $\frac{n(n-1)}{2}$ & $\EE_{-2(n-1)}$ \\
$\O_q(\mathbb{OP}^2)$ & $16$ & $\EE_{-12}$ \\
$\O_q(\mathbf{F})$ & $27$ & $\EE_{-18}$ \\
\hline
\end{tabular}
}
\end{table}
\end{center}

\subsection{Twisted Dolbeault--Dirac Fredholm Operators}

Since each $\O_q(G)$ is finitely generated as an algebra, Proposition \ref{prop:separable} implies that the Hilbert space of square integrable twisted forms is separable. Every morphism 
$$
f:  \Omega^{\bullet}_q(G/L_S) \otimes_{\O_q(G/L_S)} \F \to  \Omega^{\bullet}_q(G/L_S) \otimes_{\O_q(G/L_S)} \F
$$
in the category of relative Hopf modules extends to a bounded operator on the Hilbert space. In particular, Corollary \ref{cor:sl2rep} implies that the Hilbert space carries a bounded representation of $\frak{sl}_2$.

As shown in \cite[Theorem 4.5]{DOKSS}, for each relative Hopf module $\F$, there exists a unique $\O_q(G)$-covariant holomorphic structure  
$$
\adel_{\F}:\F \to  \Omega^{(0,1)}_q(G/L_S) \otimes_{\O_q(G/L_S)} \F.
$$ 
We denote the associated Chern connection by $\nabla$, and note that it is a bimodule connection \cite[\textsection 6.5]{LCHK}. It now follows from the results of \textsection \ref{subsection:BoundCommutators} that the commutators $[\nabla,\lambda(b)]$, \, $[\del_{\F},\lambda(b)]$, and $[\adel_{\F},\lambda(b)]$ are bounded operators. Moreover, it follows from the results of \textsection \ref{subsection:CESA} that the operators $\nabla, \del_{\F}$, and $\adel_{\F}$ are essentially self-adjoint, as are the corresponding Laplacians.

It was shown in \cite[Theorem 4.9]{DOKSS} that the Hermitian holomorphic module $\EE_{k}$ is positive, and that $\EE_{-k}$ is negative, for any $k \in \mathbb{Z}_{>0}$. The following theorem, one of the main results of the paper, shows that twisting by the negative line modules produces a Fredholm operator. It builds on the Borel--Weil theorem for irreducible quantum flag manifolds, and Serre duality for noncommutative K\"ahler structures.

\begin{thm}
For $k \in \mathbb{Z}_{>0}$, and $q \in I$, the $\EE_{-k}$-twisted Dolbeault--Dirac operator is a  Fredholm operator. Moreover, the index of the operator is given by 
\begin{align} \label{eqn:index}
\mathrm{index}\!\left(D_{\adel_{\EE_{-k}}} \right) = 
\begin{cases}
~ (-1)^M \dim\!\big(V_{(k-C_S)\varpi_x}\big), & \textrm{ if } k \geq C_S,\\
~ 0, & \textrm{ otherwise},
\end{cases}
\end{align}
where $2M$ is the total dimension of the dc, $\Delta^+$ is  the set of positive roots of $\frak{g}$, and $\rho$ is the half-sum of positive roots. In particular, if $k \geq C_S$, then it holds that 
\begin{align} \label{eqn:Weyl.Index}
\mathrm{index}\!\left(D_{\adel_{\EE_{-k}}} \right) = (-1)^M \frac{\prod_{\alpha \in \Delta^+} \big(\rho + (k-C_S)\varpi_x,\alpha \big)}{\prod_{\alpha \in \Delta^+} (\rho,\alpha)}. 
\end{align}
\end{thm}
\begin{proof}
Since $\EE_{-k}$ is a negative line module, Theorem \ref{thm:THETHM} says that the operator is a Fredholm operator  if the anti-holomorphic cohomology groups of $\EE_{-k}$ are finite-dimensional. As shown in \cite{OSV}, the noncommutative Kodaira vanishing theorem for noncommutative K\"ahler structures now implies that 
\begin{align*}
H^{(M,M-b)}_{\adel_{\EE_{k}}} = 0, & &  \textrm{ for all } b < M.
\end{align*}
It now follows from noncommutative Serre duality \cite[\textsection 6.2]{OSV} that 
\begin{align*}
H^{(0,b)}_{\adel_{\EE_{-k}}} = 0, & &  \textrm{ ~~~~ for all } b < M.
\end{align*}
Moreover, it was shown in \cite{OSV} that Serre duality, together with the Borel--Weil theorem for the irreducible quantum flag manifolds \cite[Theorem 6.1]{CDOBBW}, implies that 
$$
H^{(0,M)}_{\adel_{\EE_{-k}}}
$$
is a finite-dimensional irreducible $U_q(\frak{g})$-module. If $k \geq C_S$, then it has highest weight $-w_0((C_S-k)\varpi_x)$, and otherwise it is the zero module. In particular, the anti-holomorphic cohomology groups of $\EE_{-k}$ are finite-dimensional, implying that the $\EE_{-k}$-twisted Dolbeault--Dirac operator is a Fredholm operator as claimed.

The first equality in \eqref{eqn:index} now follows from the index formula given in Proposition \ref{prop:index.Euler}. The equality in \eqref{eqn:Weyl.Index} follows immediately from the Weyl dimension formula for finite-dimensional $U_q(\frak{g})$-modules \cite[\textsection 24.3]{Humph}.
\end{proof}

\begin{eg} 
Consider the special case of quantum projective space $\O_q(\mathbb{CP}^n)$. This is the $A_{n}$-series irreducible quantum flag manifold corresponding to the subset of simple roots  $S = \Pi\backslash \{\varpi_1\}$,  where we have adopted the standard numbering of roots  \cite[\textsection 11.4]{Humph}. For $k = 1, \dots, n$,  the top anti-holomorphic cohomology group 
$$
H^{(0,n)}_{\adel_{\EE_{-k}}}
$$ 
of the relative line module $\EE_{-k}$ vanishes, meaning that the index of its twisted Dirac operator is zero. For the line module $\EE_{-n-1}$, the dimension of 
$$
H^{(0,n)}_{\adel_{\EE_{-n-1}}}
$$
is $1$, meaning that its twisted Dirac operator has index $(-1)^n$. For the line module $\EE_{-n-2}$, the top cohomology group 
$$
H^{(0,n)}_{\adel_{\EE_{-n-2}}}
$$ 
has the dimension of the representation $V_{\varpi_1}$, which is to say, the dimension of the vector space representation of $\frak{sl}_{n+1}$. Thus we see that 
\begin{align*}
\mathrm{index}\!\left(D_{\adel_{\EE_{-n-2}}}\right) = (-1)^n(n+1).
\end{align*}
\end{eg}

\begin{eg}
More generally, consider the quantum $s$-plane Grassmannian $\O_q(\mathrm{Gr}_{n+1,s})$,  for $s=1, \dots , n$, that is to say, the $A_{n}$-series irreducible quantum flag manifold corresponding to $S = \Pi\backslash \{\varpi_s\}$. Again, for $k = 1, \dots, n$, the relative line module $\EE_{-k}$ has trivial anti-holomorphic cohomology group 
$$
H^{(0,s(n+1-s))}_{\adel_{\EE_{-k}}},
$$ 
meaning that the index of the twisted Dirac operator is zero. For the line module $\EE_{-n-1}$, the dimension of 
$$
H^{(0,s(n+1-s))}_{\adel_{\EE_{-n}}}
$$
is $1$, meaning that its twisted Dirac operator has index $(-1)^{s(n+1-s)}$. For the relative line module $\EE_{-n-2}$, the top anti-holomorphic cohomology group has the same dimension as the fundamental representation $V_{\varpi_s}$, which is to say, the dimension of the $s$-fold exterior power of the vector space representation of $\frak{sl}_{n+1}$. Thus we see that 
\begin{align*}
\mathrm{index}\!\left(D_{\adel_{\EE_{-n-2}}}\right) = (-1)^{s(n+1-s)} \binom{n+1}{s}.
\end{align*}  
\end{eg}

\begin{eg}
Next, we consider the quantum spinor variety $\O_q(\mathbf{S}_n)$, which is to say, the $D_n$-series quantum flag manifold corresponding to the subset of simple roots $S = \Pi\backslash \{\alpha_{n}\}$ (or isomorphically $S = \Pi\backslash \{\alpha_{n-1}\}$). For the line module $\EE_{-k}$, when $k = 1, \dots, 2n-3$, the top anti-holomorphic cohomology group 
$$
H^{(0,\frac{n(n-1)}{2})}_{\adel_{\EE_{-k}}}
$$
vanishes, meaning that the index of the associated twisted Dirac operator is zero. Taking the line module $\EE_{-2(n-1)}$, the cohomology group has dimension $1$, meaning that its twisted Dirac operator has index $(-1)^{\frac{n(n-1)}{2}}$. For the line module $\EE_{-2n+1}$, the cohomology group has the same dimension as the representation $V_{\varpi_n}$, one of the two \emph{half-spin representations}. Thus we have that 
\begin{align*}
\mathrm{index}\!\left(D_{\adel_{\EE_{-2n+1}}}\right) = (-1)^{\frac{n(n-1)}{2}}2^{n-1}.
\end{align*}
\end{eg}

\begin{eg}
Finally, we consider the case of the quantum Freudenthal variety $\OO_q(\mathbf{F})$. Here the line module $\EE_{-19}$ has index 
$$
\mathrm{index}\!\left(D_{\adel_{\EE_{-19}}}\right) = - \mathrm{dim}(V_{\varpi_7}) = -56.
$$
We note that the $56$-dimensional representation $V_{\varpi_7}$ is the lowest-dimensional non-trivial $E_7$-module, and as such, it is the $E_7$-analogue of the vector space representation.
\end{eg}

See \cite[Table 5]{OnishchikVinbergLeabh} for a list of explicit dimensions for other distinguished representations.


\appendix

\section{Compact Quantum Groups} \label{app:CQGs}

In this appendix we present two complementary approaches to compact quantum groups. The first is purely Hopf algebraic and due to Koornwinder and Dijkhuizen \cite{KoornDijk}. The second approach is $C^*$-algebraic and due to Woronowicz \cite{WoroCQPGs}. 

\subsection{Compact Quantum Group Algebras}

For $(V,\Delta_L)$ a left $A$-comodule, its space of {\em matrix elements} is the sub-coalgebra 
\begin{align*}
\mathcal{C}(V) : = \mathrm{span}_{\mathbb{C}} \Big\{(\mathrm{id} \otimes f)\Delta_L(v) \,|\, f \in \text{Hom}_{\mathbb{C}}(V,\mathbb{C}), v \in V \Big\} \sseq A.
\end{align*}
A comodule is irreducible if and only if its coalgebra of matrix elements is irreducible, and, for $W$ another left $A$-comodule,  $\mathcal{C}(V) = \mathcal{C}(W)$ if and only if $V$ is isomorphic to $W$. 

Let us now recall the definition of a cosemisimple Hopf algebra, a natural generalisation of the properties of a reductive algebraic group. (See \cite[Theorem 11.13]{KSLeabh} for details.)

A Hopf algebra $A$ is called  {\em cosemisimple} if it admits a (necessarily unique) linear map $\haar:A \to \bC$, called the {\em Haar functional}, such that $\haar(1) = 1$, and 
\begin{align*}
(\id \oby \haar) \circ  \DEL(a) = \haar(a)1, & & (\haar \oby \id) \circ \DEL(a) = \haar(a)1, & & \textrm{ for all } a \in A.
\end{align*}
This is equivalent to having the \emph{Peter--Weyl decomposition} $A \cong \bigoplus_{V\in \widehat{A}} \mathcal{C}(V)$, where summation is over $\widehat{A}$, the set of all equivalence classes of irreducible right $A$-comodules. A {\em compact quantum group algebra}, or a {\em CQGA}, is a cosemisimple Hopf $*$-algebra $A$  such that  the Haar functional $\haar$ is a state. 
Note that for a compact quantum group algebra $A$, the Haar functional is always \emph{faithful}. 
Moreover, $\haar$ is always a $*$-map.

\subsection{Compact Quantum Groups} \label{appendix:SA:CQGs}

Compact quantum group algebras are the algebraic counterpart of Woronowicz's $C^*$-algebraic notion of a compact quantum group \cite{WoroCQPGs}. Every CQGA can be completed to a compact quantum group, and every such completion admits an extension of $\haar$ to a $C^*$-algebraic state. Moreover, every CQG arises as the completion of a CQGA \cite[Theorem 5.4.1]{Timmermann}. Every completion lives between a smallest and a largest completion, analogous to the full and reduced group $C^*$-algebras \cite[\textsection 5.4.2]{Timmermann}.

The completion relevant to this paper is the smallest completion, whose construction we now briefly recall. (See \cite[\textsection 5.4.2]{Timmermann} for a more detailed presentation.) For $\haar$ the Haar functional of a CQGA $A$, an inner product is defined on $A$ by
\begin{align*}
\langle -,- \rangle_{\haar}: A \times A \to \mathbb{C}, & & (a,b) \mapsto \haar(a^*b).
\end{align*}  
Consider now the  faithful $*$-representation $\lambda_A: A \to \mathrm{End}_{\mathbb{C}}(A)$, uniquely defined by $\lambda_A(a)(b) := ab$, where $\mathrm{End}_{\mathbb{C}}(A)$ denotes the $\mathbb{C}$-linear operators on $A$. For all $a \in A$, the operator $\lambda_A(a)$ is bounded with respect to $\langle -,- \rangle_{\haar}$. Hence, denoting by $L^2(A)$ the associated Hilbert space completion of $A$, each operator $\lambda_A(a)$ extends to an element of $\mathbb{B}(L^2(A))$.  We denote by $\mathcal{A}_{\mathrm{red}}$  the corresponding closure of $\lambda_A(A)$ in $\mathbb{B}(L^2(A))$. The coproduct of $A$ extends to  a $*$-homomorphism $\Delta:\mathcal{A}_{\mathrm{red}} \to \mathcal{A}_{\mathrm{red}} \otimes_{\mathrm{min}} \mathcal{A}_{\mathrm{red}}$, and the pair $(\mathcal{A}_{\mathrm{red}},\Delta)$ forms a CQG.


\section{The Rudiments of Unbounded Operators} \label{appendix:Unbounded}

In this appendix, we present the rudiments of the theory of unbounded operators on Hilbert spaces, with a view to making the paper more accessible to those coming from an algebraic or geometric background. For more details we refer the reader to the standard texts \cite{Rudin} and \cite{HigsonRoe}.

Let $T:\mathrm{dom}(T) \to \mathcal{H}$ be a not necessarily bounded operator on a Hilbert space $\mathcal{H}$, with $\mathrm{dom}(T)$ denoting its domain of definition. We say that $T$ is \emph{closed} if its graph $\mathcal{G}(T)$ is closed in the direct sum $\mathcal{H} \oplus \mathcal{H}$. We say that an operator $T$ is {\em closable} if the closure of its graph in $\mathcal{H} \oplus \mathcal{H}$ is the graph of a (necessarily closed) operator $T^c$, which we call the \emph{closure} of $T$. When no confusion arises we will not distinguish notationally between an operator and its closure.

For $T:\mathrm{dom}(T) \to \mathcal{H}$ a densely-defined operator, the associated {\em adjoint operator} $T^{\dagger}$ has domain consisting of those elements $x \in \mathcal{H}$ such that
\begin{align*}
\psi_x: \mathrm{dom}(T) \to \mathbb{C}, & & y \mapsto \langle x, T(y) \rangle
\end{align*}
is a continuous linear functional. By the Riesz representation theorem, there exists a unique $z \in \mathcal{H}$, such that 
$
\langle z,y \rangle  =  \langle x, T(y) \rangle,  \text{ for all }  y \in \mathrm{dom}(T).
$
The operator $T^{\dagger}$ is then defined as
\begin{align*}
T^{\dagger}: \mathrm{dom}(T^{\dagger}) \to \mathcal{H}, & & x \mapsto z.
\end{align*}
Any  operator whose adjoint is densely-defined is necessarily closable, see \cite[Theorem 13.8]{Rudin} for details.

A densely-defined operator $T$  is said to be \emph{symmetric} if 
\begin{align*}
\langle T(x), y \rangle = \langle x, T(y) \rangle, & & \text{ for all } x,y \in \mathrm{dom}(T).
\end{align*}
An operator $T$ is said to be  \emph{self-adjoint} if it is symmetric and $\mathrm{dom}(T) = \mathrm{dom}(T^{\dagger})$, and is said to be \emph{essentially self-adjoint} if it is closable and its closure is self-adjoint. As explained in \cite[\textsection 13.20]{Rudin}, a densely-defined symmetric operator is essentially self-adjoint if the operators  $T + \mathbf{i} \, \id_{\mathcal{H}}$ and $T - \mathbf{i} \, \id_{\mathcal{H}}$  have dense range.

A complex number $\lambda$ is said to be in the \emph{resolvent set} $\lambda(T)$ of an unbounded operator $T: \dom(T) \to \mathcal{H}$, if 
\begin{align*}
T - \lambda\, \id_{\mathcal{H}}: \dom(T) \to \H, 
\end{align*}
has a bounded inverse, that is,  if there exists a bounded operator
$
 S : \mathcal{H} \to \dom(T) 
$
such that
$
S \circ (T- \lambda \, \id_{\mathcal{H}})= \id_{\dom(T)}
$
and 
$(T- \lambda \, \id_{\mathcal{H}}) \circ S = \id_{\mathcal{H}}$.
The \emph{spectrum} of $T$, which we denote by $\s(T)$,  is the complement of $\lambda(T)$ in $\mathbb{C}$.  Just as in the bounded case, self-adjoint operators have real spectrum. We denote the set of eigenvalues of $T$ by $\sigma_P(T)$ and call it the \emph{point spectrum} of $T$. It is clear from the definition of the spectrum that  $\sigma_P(T) \sseq \sigma(T)$.

We now recall the functional calculus for unbounded self-adjoint operators: For any self-adjoint operator  $T$, and any bounded Borel function $f: \sigma(T) \to \mathbb{C}$, one can associate a bounded operator $f(T):\H \to \H$. This extends the usual functional calculus for bounded operators (see \cite[\textsection 1.8]{HigsonRoe} for details).


\section{Spectral Triples} \label{app:spectraltriple}

In this appendix we recall the definition of a spectral triple and produce sufficient and necessary conditions on the point spectrum of the Dolbeault--Dirac operator of a CQH-K\"ahler space to give a spectral triple. We also discuss how non-vanishing of the anti-holomorphic Euler characteristic of the underlying complex structure implies non-triviality of the associated $K$-homology class.

\subsection{Spectral Triples and the Bounded Transform}

The $K$-homology of a $C^*$-algebra is the unitary equivalence classes of even Fredholm modules up to operator homotopy. In practice the calculation of the index of a $K$-homology class, or more generally its pairing with $K$-theory, can prove difficult. However, the work of Baaj and Julg \cite{BaajJulg}, and Connes and Moscovici \cite{ConnesMosc}, shows that by considering spectral triples, unbounded representatives of $K$-homology classes, the problem can often become more tractable.

\begin{defn}
A {\em spectral triple} $(A,\H,D)$  consists of a unital \mbox{$*$-algebra} $A$, a separable Hilbert space $\H$, endowed with a faithful $*$-representation $\lambda:A \to \mathbb{B}(\H)$, and  \mbox{$D: \dom(D)  \to \H$} a densely-defined self-adjoint operator, such that
\begin{enumerate}
\item $\lambda(a)\text{dom}(D) \sseq \text{dom}(D)$, for all $a \in A$, 

\item $[D,\lambda(a)]$ is a bounded operator,  for all $a \in A$,

\item $(D^2 + \mathbf{i})^{-1} \in \mathcal{K}(\H)$, where $\mathcal{K}(\H)$ denotes the compact operators on $\H$.

\end{enumerate}

An {\em even spectral triple} is a quadruple  $(A,\mathcal{H},D,\gamma)$, consisting of a spectral triple $(A,\mathcal{H},D)$, and a $\bZ_2$-grading $\mathcal{H} = \mathcal{H}_0 \oplus \mathcal{H}_1$ of Hilbert spaces $\gamma$ , with respect to which $D$ is a degree $1$ operator, and $\lambda(a)$ is a degree $0$ operator, for each $a \in A$.
\end{defn}

One reason why spectral triples are important is that they provide unbounded representatives for $K$-homology classes. For a spectral triple $(A,\mathcal{H},D)$, its {\em bounded transform} is the  operator
\begin{align*}
\frak{b}(D) := \frac{D}{\sqrt{1+D^2}} \in \mathbb{B}(\mathcal{H}),
\end{align*}
defined via the functional calculus. 
A Fredholm module is given by $(\mathcal{H}, \lambda, \frak{b}(D))$. (See  \cite{CP1} for details.)  The index of the Fredholm operator $D^+:H_0 \to \mathcal{H}_1$ is clearly equal to the index of the bounded transform. Since the index is an invariant of $K$-homology classes, a spectral triple with  non-zero index has a non-trivial associated $K$-homology class

\subsection{Spectral Triples and Dolbeault--Dirac Eigenvalues}

We now formulate precise criteria for when the Dolbeault--Dirac operator of a CQH-Hermitian space gives a spectral triple. For sake of clarity and convenience, let us recall the relevant properties of $L^2(\Omega^{\bullet})$ and $D_{\adel}$. If $A$ is finitely generated, then it follows from Proposition \ref{prop:separable} that $L^2(\Omega^{\bullet})$ is separable. By Proposition \ref{prop:star.map} we have a faithful $*$-representation $\lambda: B \to \mathbb{B}(L^2(\Omega^\bullet))$. From Corollary \ref{cor:CESA}  we know that  $D_{\adel}$ is an essentially self-adjoint operator, which is, moreover, densely-defined by construction. By Proposition \ref{cor:bcomm}, the commutators $[D_{\adel},\lambda(b)]$ are bounded, and by Proposition  \ref{prop:domains} above, $\lambda(b)\mathrm{dom}(D_{\adel}) \sseq \mathrm{dom}(D_{\adel})$, for all $b \in B$. With respect to the obvious $\mathbb{Z}_2$-grading $\gamma$, the operator $D_{\adel}$ is of degree $1$, and $\lambda(b)$ is a degree $0$ operator, for all $b \in B$. Finally, we note that since  $D_{\adel}$ is diagonalisable on $L^2(\Omega^\bullet)$, it has compact resolvent if and only if its eigenvalues tend to infinity and have finite multiplicity. Collecting these facts together gives the following proposition.

\begin{prop} \label{prop:spectraltrip}
Let $\mathbf{H} = (B \subseteq A,\Omega^{\bullet},\Omega^{(\bullet,\bullet)},\sigma)$ be a CQH-Hermitian space for which $A$ is finitely generated as an algebra, then an even spectral triple is given by 
\begin{align*}
\left(B \subseteq A, L^2(\Om^{(0,\bullet)}), D_{\adel},\gamma\right),
\end{align*}
if and only if the eigenvalues of $D_{\adel}$ tend to infinity and have finite multiplicity.  
\end{prop}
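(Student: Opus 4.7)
The strategy is to check that $\left(B^{\mathrm{op}}, L^2(\Om^{(0,\bullet)}), D_{\adel},\gamma\right)$ satisfies the axioms of an even spectral triple as recalled in Appendix \ref{section:spectraltriples}, and to observe that only the compact resolvent condition retains any content. One first notes that the restriction to $L^2(\Omega^{(0,\bullet)})$ is legitimate: the operators $\adel$ and $\adel^{\dagger}$ respectively raise and lower anti-holomorphic degree by one, hence $D_{\adel}$ preserves $\Omega^{(0,\bullet)}$; the representation $\rho(b)$ for $b \in B = \Omega^{(0,0)}$ preserves bigrading and so preserves this subspace; and $\gamma$ restricts to the parity grading $(-1)^b$ on $\Omega^{(0,b)}$.

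Next I would list the axioms that follow immediately from results already established. The faithful $*$-representation $\rho\colon B^{\mathrm{op}}\to \mathbb{B}(L^2(\Omega^{(0,\bullet)}))$ is the restriction of Proposition \ref{cor:faithfulRep}; essential self-adjointness of $D_{\adel}$ on the restricted Hilbert space is Corollary \ref{cor:CESA}; boundedness of $[D_{\adel},\rho(b)]$ for all $b \in B$ is Corollary \ref{cor:bddcomm}; stability of $\dom(D_{\adel})$ under $\rho(b)$ is Corollary \ref{cor:domains}; and with respect to $\gamma$, the operator $D_{\adel}$ is odd while $\rho(b)$ is even, as noted in \textsection \ref{subsection:gradingops}.

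The only remaining requirement, and the heart of the statement, is the compact resolvent condition on $D_{\adel}$. Here I would invoke Corollary \ref{cor:CESA}, which asserts that $D_{\adel}$ is diagonalisable as a self-adjoint operator on $L^2(\Omega^{(0,\bullet)})$. For a diagonalisable self-adjoint operator with eigenvalues $\{\mu_n\}$ and corresponding spectral projections $\{P_n\}$, the resolvent admits the spectral expansion
\begin{equation*}
(D_{\adel} - \mathbf{i})^{-1} \;=\; \sum_n (\mu_n - \mathbf{i})^{-1}\, P_n.
\end{equation*}
By the standard characterisation of diagonal compact operators on a separable Hilbert space, this sum is norm-convergent to a compact operator if and only if $(\mu_n - \mathbf{i})^{-1} \to 0$ and each $P_n$ has finite rank, which is exactly the stated condition that $|\mu_n| \to \infty$ and each eigenvalue has finite multiplicity.

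No substantive obstacle is anticipated: the proof is essentially a bookkeeping assembly of results from \textsection \ref{section:TheHilbertSpace}, followed by the standard spectral-theoretic characterisation of diagonal compact operators. The mild subtlety worth checking is that the restriction of $D_{\adel}$ to $L^2(\Omega^{(0,\bullet)})$ inherits closedness and diagonalisability from the ambient operator on $L^2(\Omega^\bullet)$, which is immediate since the Peter--Weyl decomposition refines the bigrading.
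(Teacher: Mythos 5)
Your proposal is correct and follows essentially the same route as the paper: the paper likewise assembles Corollaries \ref{cor:faithfulRep}, \ref{cor:CESA}, \ref{cor:bddcomm}, \ref{cor:domains} and the grading discussion of \textsection \ref{subsection:gradingops}, and then notes that, $D_{\adel}$ being diagonalisable, compact resolvent is equivalent to the eigenvalues having finite multiplicity and tending to infinity. Your additional remark that the restriction from $L^2(\Omega^{\bullet})$ to $L^2(\Omega^{(0,\bullet)})$ is compatible with $D_{\adel}$, $\rho(b)$ and $\gamma$ is a small point of extra care, not a different argument.
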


We call such a spectral triple the \emph{Dolbeault--Dirac spectral triple} of $\mathbf{H}$. In the accompanying paper \cite{DOS1}, these criteria were verified for the special case of quantum projective space $\O_q(\mathbb{CP}^n)$, producing a motivating family of examples of Dolbeault--Dirac spectral triples. 

The discussions in \textsection \ref{subsection:Fred} give the following immediate result, where we denote by $\mathcal{B}$ the closure of $\lambda(b)$ in $\mathbb{B}\!\left(L^2(\Omega^{(0,\bullet)})\right)$.

\begin{cor} \label{cor:NTEulermeansNTKClass}
Let $\mathbf{H} = (B \subseteq A,\Omega^{\bullet},\Omega^{(\bullet,\bullet)},\sigma)$ be a CQH-Hermitian space  with a Dolbeault--Dirac spectral triple. The  $K^0(\B)$-class of the spectral triple is non-trivial if the holomorphic Euler characteristic of $\Omega^{(\bullet,\bullet)}$ is non-trivial.
\end{cor}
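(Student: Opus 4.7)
The plan is to exploit the standard index pairing between $K$-homology and $K$-theory. Specifically, the bounded transform $F_{\adel} := D_{\adel}(1+D_{\adel}^2)^{-1/2}$, together with the representation $\rho: \mathcal{B}^{\mathrm{op}} \to \mathbb{B}(L^2(\Omega^{(0,\bullet)}))$ and the grading $\gamma$, produces an even Fredholm module, whose class $[F_{\adel}] \in K^0(\mathcal{B}^{\mathrm{op}})$ is by construction the $K$-homology class associated to the spectral triple (see \textsection \ref{app:STs}). The index pairing with $K_0(\mathcal{B}^{\mathrm{op}})$ is then defined in the usual way: to any projection $p$ over $\mathcal{B}^{\mathrm{op}}$ one associates the Fredholm index of the compression $p F_{\adel}^+ p$.

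The first step is to evaluate this pairing on the canonical class $[1] \in K_0(\mathcal{B}^{\mathrm{op}})$ represented by the unit projection. Since $p=1$, the compressed operator is simply $F_{\adel}^+$, and by a standard argument the Fredholm index of $F_{\adel}^+$ agrees with that of $D_{\adel}^+$ (the bounded transform preserves the kernel, and the cokernel is finite-dimensional in precisely the same cases, since $D_{\adel}$ has compact resolvent by Proposition \ref{prop:spectraltrip}). Thus
\begin{align*}
\langle [F_{\adel}], [1] \rangle = \mathrm{index}(D_{\adel}^+).
\end{align*}

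The second step is to invoke Theorem \ref{thm:Fredequivprops}, which identifies $\mathrm{index}(D_{\adel}^+)$ with the holomorphic Euler characteristic $\chi_{\adel}$; note that the Fredholm hypothesis required in that theorem is automatic here because $D_{\adel}$ has compact resolvent (the kernel is finite-dimensional and the image is automatically closed for an operator with discrete spectrum accumulating only at infinity). Consequently
\begin{align*}
\langle [F_{\adel}], [1] \rangle = \chi_{\adel}.
\end{align*}
If the class $[F_{\adel}] \in K^0(\mathcal{B}^{\mathrm{op}})$ were trivial, then every index pairing with a $K_0$-class would vanish, and in particular the pairing with $[1]$ would be zero, forcing $\chi_{\adel}=0$. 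The contrapositive is exactly the claim.

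There is no real obstacle here beyond a careful bookkeeping of the conventions relating spectral triples, their bounded transforms, and the index pairing; the only potentially subtle point is to confirm that the Fredholm index of the bounded transform of an odd self-adjoint operator with compact resolvent coincides with the index of the original unbounded operator's positive part. This is a well-known fact for regular self-adjoint operators on a $\mathbb{Z}_2$-graded Hilbert space, and follows directly from the fact that $x \mapsto x(1+x^2)^{-1/2}$ is a homeomorphism of $\mathbb{R}$ preserving the sign, so that kernel and cokernel are preserved.
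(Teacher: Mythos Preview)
Your proof is correct and follows essentially the same route as the paper, which treats this corollary as an immediate consequence of Theorem~\ref{thm:Fredequivprops} together with the discussion of the bounded transform in Appendix~\ref{app:STs}. The only cosmetic difference is that you phrase the argument via the index pairing with $[1]\in K_0(\mathcal{B}^{\mathrm{op}})$, whereas the paper invokes the index homomorphism $\mathrm{Index}:K^0(\mathcal{B}^{\mathrm{op}})\to\mathbb{Z}$ directly; these are of course the same map.
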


The notion of a \emph{noncommutative Fano structure} was introduced in \cite[Definition 8.8]{OSV}. It is a refinement of  a K\"ahler structure, generalising the classical definition of a Fano manifold. A \emph{CQH-Fano space} is a CQH-K\"ahler space whose constituent K\"ahler structure is a Fano structure. It follows from Theorem \ref{thm:PosDefKappa} and \cite[Theorem 4.12]{DOKSS} that the irreducible quantum flag manifolds give CQH-Fano structures.

\begin{cor} 
Let $\mathbf{F} = (B \subseteq A,\Omega^{\bullet},\Omega^{(\bullet,\bullet)},\sigma)$ be a CQH-Fano space with a Dolbeault--Dirac spectral triple. Then the  $K^0(\B)$-class of the spectral triple is non-trivial.
\end{cor}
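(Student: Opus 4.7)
The plan is to chain together the two immediately preceding corollaries, using the spectral triple hypothesis to bridge them. First I would unwind what having a Dolbeault--Dirac spectral triple buys us: by Proposition \ref{prop:spectraltrip}, the eigenvalues of $D_{\adel}$ have finite multiplicity and tend to infinity. Restricting attention to $D_{\adel}^+$, this immediately gives that $\ker(D_{\adel}^+)$ is finite-dimensional, and that the set of non-zero eigenvalues of $D_{\adel}^+$ is bounded away from $0$, so $\mathrm{im}(D_{\adel}^+)$ is closed (as noted in Remark \ref{remark:SLBoch}). By Theorem \ref{thm:Fredequivprops}, $D_{\adel}^+$ is therefore an even Fredholm operator.

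Next I would invoke Corollary \ref{cor:FanoEuler}, which applies precisely in the Fano situation: once Fredholmness of $D_{\adel}^+$ is established, the noncommutative Kodaira vanishing input (via Theorem \ref{thm:FanoKodaira}) forces $H^{(0,k)} = 0$ for all $k \neq 0$, and so
\begin{align*}
\chi_{\adel} \;=\; \dim\!\big(H^{(0,0)}\big) \;\neq\; 0,
\end{align*}
with non-vanishing guaranteed by the fact that $1 \in B$ is a non-zero $\adel$-closed element representing a non-trivial class in $H^{(0,0)}$.

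Finally, Corollary \ref{cor:NTEulermeansNTKClass} says that non-vanishing of $\chi_{\adel}$ is already sufficient to conclude non-triviality of the associated $K^0(\mathcal{B}^{\mathrm{op}})$-homology class, which closes the argument. There is essentially no hard step here, since all the substantive work (Hodge decomposition, Kodaira vanishing, the identification of the Fredholm index with $\chi_{\adel}$, and the bounded transform producing the $K$-homology class) has already been carried out upstream; the only thing to check carefully is the passage from ``compact resolvent plus diagonalisability'' to ``$D_{\adel}^+$ is Fredholm'', and this is routine once one observes that the strictly positive eigenvalues are bounded below and have finite-dimensional eigenspaces. The proof therefore reduces to a two-line citation of the preceding two corollaries, prefaced by this Fredholmness observation.
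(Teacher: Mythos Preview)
Your proposal is correct and matches the paper's approach: the paper presents this corollary (together with Corollary~\ref{cor:NTEulermeansNTKClass}) as an immediate consequence of the preceding discussions, without giving a separate proof. Your unpacking---compact resolvent forces $D_{\adel}^+$ to be Fredholm, then Corollary~\ref{cor:FanoEuler} yields $\chi_{\adel}\neq 0$, then Corollary~\ref{cor:NTEulermeansNTKClass} gives the non-trivial $K$-homology class---is exactly the intended chain of implications.
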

\begin{proof}
It follows from \cite[Corollary 8.9]{OSV} that $H^{(0,k)}_{\adel} = 0$, for all $k >0$. Thus we see that the anti-holomorphic Euler characteristic of the calculus is equal to the dimension of $H^{(0,0)}_{\adel}$. However, since $1$ is always contained in  $\ker(\adel)$, this is always non-zero. Thus it follows from Corollary \ref{cor:NTEulermeansNTKClass} that the $K^0(\B)$-class of the spectral triple is non-trivial.
\end{proof}


We finish this subsection with an easy observation about the Dolbeault--Dirac operator of the opposite CQH-Hermitian space. 

\begin{prop} \label{lem:unitaryequivalence}
For a CQH-Hermitian space $\mathbf{H} = (B,\Omega^{\bullet},\Omega^{(\bullet,\bullet)},\sigma)$,  the two operators $D_{\del}:\Omega^{(\bullet,0)} \to \Omega^{(\bullet,0)}$ and $D_{\adel}:\Omega^{(0,\bullet)} \to \Omega^{(0,\bullet)}$ are unitarily equivalent. In particular, 
\begin{align} \label{eqn:oppST}
 \left(B, L^2(\Om^{(\bullet,0)}), D_{\del}\right)
\end{align}
is a spectral triple if and only if $\left(B, L^2(\Om^{(0,\bullet)}), D_{\adel}\right)$ is a spectral triple. 
\end{prop}
\begin{proof}
A form $\omega^* \in \Omega^{(0,\bullet)}$ is an eigenvector of $D_{\del}$ if and only if $\omega \in \Omega^{(\bullet,0)}$ is an eigenvector of $D_{\adel}$, as we see from the identity
$$
D_{\del}(\omega^*) = D_{\adel}(\omega)^* = (\lambda \omega)^* = \overline{\lambda}\omega^* = \lambda \omega^*.
$$
Thus the set of eigenvalues of $D_{\del}$ coincides with the set of eigenvalues of $D_{\adel}$, and  we have a real linear isomorphism between the respective eigenspaces. Since the eigenspaces of each operator are necessarily orthogonal, we can now construct a unitary map $U:\Omega^{(0,\bullet)} \to \Omega^{(\bullet,0)}$ satisfying $D_{\del} = U \circ D_{\adel} \circ U^{-1}$. Extending $U$ to the domain of the closure of $D_{\adel}$ gives the required unitary equivalence.  It now follows from Proposition~\ref{prop:spectraltrip}  that if one triple is a spectral triple then so is the other.
\end{proof}

It is important to note that the unitary equivalence between the operators $D_{\del}$ and $D_{\adel}$ will not in general be a module map, nor an $A$-comodule map.


\bibliographystyle{abbrv}

\end{document}